\newcommand{\N}{\mathbb{N}}
\newcommand{\R}{\mathbb{R}}
\newcommand{\Q}{\mathbb{Q}}
\newcommand{\Z}{\mathbb{Z}}
\newcommand{\C}{\mathbb{C}}
\newcommand{\rst}[1]{\ensuremath{{\mathbin\mid}\raise-.5ex\hbox{$#1$}}}
\newcommand{\lie}{\mathfrak{g}}
\newcommand{\lien}{\mathfrak{n}}
\newcommand{\TG}{$\mathcal{T}$-group}
\DeclareMathOperator{\GL}{GL}
\DeclareMathOperator{\Aut}{Aut}
\DeclareMathOperator{\Id}{Id}
\author{Jonas Der\'e\thanks{KU Leuven Campus Kulak Kortrijk, Department of Mathematics, Research unit ‘Algebraic Topology and Group Theory’, B-8560 Kortrijk, Belgium. Email: \href{mailto:jonas.dere@kuleuven.be}{jonas.dere@kuleuven.be} The author was supported by a start-up grant of KU Leuven (grant number 3E210985).}}
\title{A note on the existence of the \\Reidemeister zeta function on groups.}
\date{\vspace{-5mm}
}
\newtheorem{Def}{Definition}[section]
\newtheorem{Ex}[Def]{Example}
\newtheorem{corollary}[Def]{Corollary}
\newtheorem{theorem}[Def]{Theorem}
\newtheorem{Prop}[Def]{Proposition}
\newtheorem{Lem}[Def]{Lemma}
\newtheorem{Rmk}[Def]{Remark}
\newtheorem*{Prop*}{Proposition}
\newtheorem*{Lem*}{Lemma}
\newtheorem{QN}{Question}
\newtheorem{Con}{Conjecture}
\newtheorem*{rep@theorem}{\rep@title}
\newcommand{\newreptheorem}[2]{%
	\newenvironment{rep#1}[1]{%
		\def\rep@title{#2 \ref{##1}}%
		\begin{rep@theorem}}%
		{\end{rep@theorem}}}
\newcommand{\suchthat}{\;\ifnum\currentgrouptype=16 \middle\fi|\;}
\newcommand\restr[2]{{
		\left.\kern-\nulldelimiterspace 
		#1 
		\vphantom{\big|} 
		\right|_{#2} 
}}
\begin{document}

\maketitle

\begin{abstract}
Given an endomorphism $\varphi: G \to G$ on a group $G$, one can define the Reidemeister number $R(\varphi) \in \N \cup \{\infty\}$ as the number of twisted conjugacy classes. The corresponding Reidemeister zeta function $R_\varphi(z)$, by using the Reidemeister numbers $R(\varphi^n)$ of iterates $\varphi^n$ in order to define a power series, has been studied a lot in the literature, especially the question whether it is a rational function or not. For example, it has been shown that the answer is positive for finitely generated torsion-free virtually nilpotent groups, but negative in general for abelian groups that are not finitely generated.

However, in order to define the Reidemeister zeta function of an endomorphism $\varphi$, it is necessary that the Reidemeister numbers $R(\varphi^n)$ of all iterates $\varphi^n$ are finite. This puts restrictions, not only on the endomorphism $\varphi$, but also on the possible groups $G$ if $\varphi$ is assumed to be injective. In this note, we want to initiate the study of groups having a well-defined Reidemeister zeta function for a monomorphism $\varphi$, because of its importance for describing the behavior of Reidemeister zeta functions.  As a motivational example, we show that the Reidemeister zeta function is indeed rational on torsion-free virtually polycyclic groups. Finally, we give some partial results about the existence in the special case of automorphisms on finitely generated torsion-free nilpotent groups, showing that it is a restrictive condition.

\medskip

\textbf{MSC:} 20F18, 37C25, 20E36, 37C30

\textbf{Keywords:} Reidemeister zeta function, Reidemeister number, virtually polycyclic groups, torsion-free nilpotent groups
\end{abstract}

\section{Introduction}

Given a group $G$ and an endomorphism $\varphi: G \to G$ on the group $G$, we say that the $x, y \in G$ are $\varphi$-conjugate if and only if there exists a $z \in G$ such that $$x = z y \varphi(z)^{-1}$$ and write this as $x \sim_\varphi y$. This notion generalizes regular conjugacy, which corresponds to the case $\varphi = \Id_G$. It is immediate that $\sim_\varphi$ forms an equivalence relation on the group $G$, and the corresponding equivalence classes $[x]_\varphi = \left\{y \in G \mid y \sim_\varphi x \right\}$ are called $\varphi$-conjugacy or twisted conjugacy classes.  The number of equivalence classes is called the \textbf{Reidemeister number} of $\varphi$ and denoted as $$R(\varphi) \in \N \cup \{\infty\}.$$ 

Of course, if $\varphi$ is an endomorphism of the group $G$, then so is every iterate $\varphi^n$, and thus we get for every $n  > 0$ a Reidemeister number $R(\varphi^n) \in\N \cup \{\infty\}$. If all these numbers happen to be finite, we can define the \textbf{Reidemeister zeta function} $R_\varphi(z)$ as $$R_\varphi(z) = \exp \left( \sum_{n=1}^\infty 
\frac{R(\varphi^n) }{n} z^n\right).$$
Ever since the Reidemeister zeta function was defined in \cite{fels91-1}, mathematicians have tried to show that it is \textbf{rational} for several classes of groups. We give an overview of known results below, more details can be found in \cite{fels00-2}. 

If the group $G$ is finite, a direct sum of a finite group and a finitely generated free abelian group or a finitely generated torsion-free nilpotent group, it is known that the Reidemeister zeta function is always rational. Moreover, for any finitely generated group $G$ and any endomorphism $\varphi$ that is eventually commutative, the same holds. Later, by using the relation to the Nielsen zeta function, the case of rationality for almost-Bieberbach groups, i.e.~finitely generated torsion-free virtually nilpotent groups, was done by \cite{dd13-2}. Finally, for automorphisms on almost-crystallographic groups, it is shown in \cite{dtv18-1} that up to Hirsch length $3$ every Reidemeister zeta function is rational, together with some more general statements for certain holonomy representations. 

All results mentioned above lead to the following conjecture due to Fel'shtyn about the rationality of the Reidemeister zeta function.

\begin{Con}
	\label{con}
For all endomorphisms $\varphi: \Gamma \to \Gamma$ on a virtually polycyclic group $\Gamma$, the Reidemeister zeta function $R_\varphi(z)$ is always rational whenever it is well-defined.
\end{Con}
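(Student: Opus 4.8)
The plan is to route the argument through Nielsen fixed-point theory on infra-solvmanifolds, following the template of all earlier rationality results. Since Conjecture~\ref{con} appears out of reach in full generality, I would first establish the case advertised in the abstract: $\Gamma$ torsion-free virtually polycyclic, and $\varphi\colon\Gamma\to\Gamma$ an arbitrary endomorphism with $R(\varphi^n)<\infty$ for every $n\ge 1$ --- and here one really must allow non-injective $\varphi$, since already the zero endomorphism of $\Z^2$ has $R(\varphi^n)=1$ for all $n$. Such a $\Gamma$ is known to be the fundamental group of a closed aspherical infra-solvmanifold $M=\Gamma\backslash S$, where $S$ is a connected simply connected solvable Lie group on which $\Gamma$ acts by affine transformations; because $M$ is a $K(\Gamma,1)$, the endomorphism $\varphi$ is induced, up to homotopy, by a self-map $f\colon M\to M$ (and $\varphi^n$ by $f^n$), with no injectivity needed for this realization step.

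The point of passing to $M$ is the averaging formula: on an infra-solvmanifold, when $R(f^n)=R(\varphi^n)$ is finite it equals the Nielsen number $N(f^n)$, and $N(f^n)$ is, up to a sign depending only on $n$ modulo some period $N$, a Lefschetz-type number of a linearization of $f^n$ --- hence a finite $\Z$-linear combination $\sum_k c_k\mu_k^n$ of powers of finitely many algebraic numbers $\mu_k$ (products of eigenvalues of the linear part of $f$), with the $c_k$ fixed on each residue class of $n$ modulo $N$. This is exactly the input for the standard rationality lemma (Fried; Dekimpe--Dugardein): if a sequence $a_n$ is, on each congruence class modulo $N$, such a finite $\Z$-combination of $n$-th powers of a fixed finite set of algebraic numbers, then $\exp\!\big(\sum_{n\ge 1}a_nz^n/n\big)$ is a rational function --- split the sum over residue classes and over the $N$-th roots of unity, and the contributions assemble into a finite product of factors $(1-\zeta\mu_kz)^{\pm 1}$. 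Applying this to $a_n=R(\varphi^n)$ gives rationality of $R_\varphi(z)$ for torsion-free $\Gamma$.

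The main obstacles I foresee are, first, securing the identity $R=N$ together with the averaging formula for general infra-solvmanifolds, and especially for non-surjective $f$, where the degree and expansion arguments available in the injective case do not apply directly; and second, the sign and absolute-value bookkeeping needed to confirm that $R(\varphi^n)$ itself --- not merely its absolute value --- has the required shape on congruence classes. For the full Conjecture~\ref{con}, torsion is a genuine barrier: a virtually polycyclic group with torsion is not an aspherical manifold group, and one cannot simply restrict $\varphi$ to a torsion-free finite-index subgroup $\Lambda$, because an arbitrary endomorphism need not map $\Lambda$ into itself. Circumventing this seems to require an addition- or orbit-counting formula computing $R(\varphi^n)$ on $\Gamma$ from twisted conjugacy data of $\varphi^n$ on the cosets of $\Lambda$, valid for endomorphisms that are neither injective nor $\Lambda$-preserving --- and producing such a formula is, as far as I can see, the crux of what remains open.
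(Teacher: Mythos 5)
First, note that the statement you set out to prove is stated in the paper as a \emph{conjecture}: the paper does not prove it and explicitly leaves the general case open. What the paper does establish is exactly the torsion-free case you isolate (Theorem \ref{thm:torsionfree}, proved as Corollary \ref{cor:torsionfree}), but by a route entirely different from yours. The paper never passes to a manifold: it shows (Theorem \ref{thm:event}) that a tame endomorphism $\varphi$ of a virtually polycyclic group is \emph{eventually virtually nilpotent}, by letting the Hirsch length of $H_n = \varphi^n(\Gamma)$ stabilize, absorbing the resulting finite kernels into the maximal finite normal torsion subgroup, and then invoking the result of \cite{dere22-1} that a virtually polycyclic group admitting a tame monomorphism must be virtually nilpotent. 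Proposition \ref{prop:sub} then gives $R_\varphi(z) = R_{\varphi_H}(z)$ for $H = \varphi^n(\Gamma)$, a finitely generated torsion-free virtually nilpotent group, and rationality is quoted from \cite{dd13-2}. This reduction is the entire content of the paper's argument, and it is precisely what your proposal lacks.

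Concretely, the gap in your plan is the step asserting that $N(f^n)$ is, up to controlled signs, a Lefschetz-type number of a linearization. The averaging formula with sign control, which is what converts $R(\varphi^n)$ into a finite $\Z$-linear combination of $n$-th powers on congruence classes, is established for \emph{infra-nilmanifolds} (Dekimpe--Dugardein) and in weaker forms for infra-solvmanifolds of type (R); it is not available for an arbitrary infra-solvmanifold carrying a self-map that is neither injective nor surjective on $\pi_1$, and the absolute values appearing in the averaging formula are exactly where rationality can fail. You flag this yourself as the main obstacle, but resolving it is the crux of the matter, not a technical afterthought: the paper's insight is that one never has to face it, because tameness already forces $\varphi^n(\Gamma)$ into the virtually nilpotent setting where \cite{dd13-2} applies. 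Without Theorem \ref{thm:event} (or the input from \cite{dere22-1} behind it), your argument does not close even in the torsion-free case. Your closing remarks on torsion correctly identify why the full conjecture is harder, but that case is open in the paper as well, so nothing is lost there relative to the source.
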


However, there are several instances for which the Reidemeister zeta function is \textbf{not rational}. Indeed, \cite[Lemma 5.9.]{fm20-1} shows that on the abelian group $\Z\left[\frac{1}{3}\right]$, the endomorphism $\varphi(x) = 2x$ has a non-rational zeta function. Similarly \cite[Section 1.2.]{bfm22-1} contains such an example on an abelian torsion group that is not finitely generated. In fact, recent papers show that in all known examples, there is a so-called P\'olya-Carlson dichotomoy for Reidemeister zeta functions, meaning that either $R_\varphi(z)$ is rational or has a natural boundary at its radius of convergence, see \cite{bfm22-1,ft21-1,fk22-1,fm20-1}. 

The main motivation for studying the Reidemeister number and the corresponding zeta function lies in \textbf{fixed point theory}. Indeed, given a self-map $f: M \to M$ on a manifold, one can write the set of fixed points as a disjoint union of fixed point classes. The number of these fixed point classes is defined as the Reidemeister number of $f$, and it is in fact equal to the Reidemeister number of the induced morphism $f_\ast$ on the fundamental group. The fixed point classes are used to define the Nielsen number, a homotopy invariant lower bound for the number of fixed points of $f$. For more details on this point of view, we refer to \cite{fels00-2}.

Note that, in order to define the Reidemeister zeta function above, one needs that the iterates of $\varphi$ have a finite Reidemeister number. 
\begin{Def}
Let $G$ be any group, then we call an endomorphism $\varphi: G \to G$ \textbf{tame} if all iterates have a finite Reidemeister number, i.e.~if $R(\varphi^n) < \infty$ for all $n > 0 $. 
\end{Def}
\noindent Equivalently an endomorphism $\varphi: G \to G$ is tame if and only it has a well-defined Reidemeister zeta function $R_\varphi(z)$. By considering the trivial endomorphism $\varphi$ defined as $\varphi(g) = e$ for all $g \in G$, for which $R(\varphi^n) = 1$ for all $n > 0$, we see that every group $G$ has at least one tame endomorphism. 

The \textbf{existence of a tame automorphism} or even just a monomorphism (i.e.~endomorphisms that are injective) on $G$ is a non-trivial condition. For example in \cite[Section 8.]{fl14-1}, the authors show that if a homeomorphism on an infra-solvmanifold of type $(R)$ has a well-defined Reidemeister zeta function, or equivalently the corresponding fundamental group has a tame automorphism, then the manifold must be an infra-nilmanifold, or thus that the group is virtually nilpotent. This was later generalized in \cite{dere22-1} by showing that every virtually polycyclic group with a tame monomorphism must be virtually nilpotent. This motivates the following general question for groups.

\begin{QN}
	\label{QN1}
Which groups $G$ have an automorphism $\varphi: G \to G$ (or more generally a monomorphism $\varphi: G \to G$) that is tame?
\end{QN}
\noindent We will argue why monomorphisms are the good class to consider in Section \ref{sec:poly}.

The reason for our intrest in Question \ref{QN1} is the fact that it also sheds light on the rationality of the Reidemeister zeta function. Indeed, answers to Question \ref{QN1} help to reduce the computation of these function to cases that are already known. This is also the strategy in \cite[Section 3.]{dtv18-1}, where the authors prove how the existence of a Reidemeister zeta function for an automorphism on the group influences the structure of the almost crystallographic group, for example in relation to the holonomy representation. In Section \ref{sec:poly} we motivate how Question \ref{QN1} helps to study the behavior of the Reidemeister zeta function, by proving the following.

\begin{theorem}\label{thm:torsionfree}
	Let $\varphi: \Gamma \to \Gamma$ be a tame endomorphism of a torsion-free virtually polycyclic group $\Gamma$, then the corresponding Reidemeister zeta function $R_\varphi$ is rational.
\end{theorem}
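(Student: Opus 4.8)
The plan is to reduce the statement, by passing to a quotient, to the case of a \emph{monomorphism} on a finitely generated torsion-free virtually nilpotent group, and then to invoke the theorem of \cite{dere22-1} (a virtually polycyclic group with a tame monomorphism is virtually nilpotent) together with the rationality result of \cite{dd13-2} for almost-Bieberbach groups.

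First I would deal with the possible non-injectivity of $\varphi$. Since $\Gamma$ is polycyclic-by-finite it satisfies the maximal condition on subgroups, so the chain $\ker\varphi \subseteq \ker\varphi^2 \subseteq \cdots$ stabilizes: there is an $N \geq 1$ with $K := \ker\varphi^N = \ker\varphi^{N+1} = \cdots$. Then $K$ is normal in $\Gamma$, it is $\varphi$-invariant (if $\varphi^N(g)=e$ then $\varphi^N(\varphi(g))=\varphi^{N+1}(g)=e$, so $\varphi(g)\in\ker\varphi^N=K$), and $\varphi^N(K)=\{e\}$, so $\varphi$ acts nilpotently on $K$. The quotient $\bar\Gamma:=\Gamma/K$ is still torsion-free: if $\varphi^N(g^m)=\varphi^N(g)^m=e$ for some $m\geq 1$, then $\varphi^N(g)=e$ because $\Gamma$ is torsion-free, so $g\in K$. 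Finally, the induced endomorphism $\bar\varphi$ on $\bar\Gamma$ is a monomorphism, since $\bar\varphi(gK)=K$ forces $\varphi(g)\in\ker\varphi^N$, i.e.\ $g\in\ker\varphi^{N+1}=K$.

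The heart of the argument is the following lemma: if $\psi:G\to G$ is an endomorphism and $M\trianglelefteq G$ is a $\psi$-invariant normal subgroup on which $\psi$ acts nilpotently (some $\psi^k$ kills $M$), then the quotient map $G\to G/M$ induces a bijection on twisted conjugacy classes, so $R(\psi)=R(\bar\psi)$ for the induced map $\bar\psi$. Well-definedness and surjectivity are immediate, so only injectivity must be checked, and since the quotient map already respects twisted conjugacy this reduces to showing that $um\sim_\psi u$ whenever $u\in G$ and $m\in M$. When $\psi(M)=\{e\}$ this is trivial: take $z=umu^{-1}\in M$, so that $\psi(z)=e$ and $zu\psi(z)^{-1}=um$. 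The general nilpotent case follows by induction on the nilpotency length, by first factoring out $M\cap\ker\psi$ (on which $\psi$ is trivial, so the previous case applies) and then applying the inductive hypothesis to $M/(M\cap\ker\psi)$ inside $G/(M\cap\ker\psi)$, whose nilpotency length has dropped by one. Applying this lemma with $\psi=\varphi^n$ and $M=K$ for every $n\geq 1$ — note that $(\varphi|_K)^n$ is again nilpotent — yields $R(\varphi^n)=R(\bar\varphi^n)$ for all $n$, so $R_\varphi(z)=R_{\bar\varphi}(z)$; in particular $\bar\varphi$ is tame, and it suffices to prove that $R_{\bar\varphi}$ is rational.

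It remains to apply the known results. The map $\bar\varphi$ is a tame monomorphism of the finitely generated torsion-free virtually polycyclic group $\bar\Gamma$, so by \cite{dere22-1} the group $\bar\Gamma$ is virtually nilpotent; being finitely generated and torsion-free it is an almost-Bieberbach group, and hence by \cite{dd13-2} its Reidemeister zeta function $R_{\bar\varphi}$ is rational. Therefore $R_\varphi=R_{\bar\varphi}$ is rational as well. I expect the main obstacle to be the quotient lemma on a nilpotently-acting normal subgroup, and in particular verifying that it is compatible with passing to all iterates $\varphi^n$; the surrounding reductions — stabilization of the kernel chain, torsion-freeness of $\bar\Gamma$, and injectivity of $\bar\varphi$ — are routine.
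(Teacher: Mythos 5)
Your proof is correct, but it takes a genuinely different reduction from the paper's. The paper works with \emph{image subgroups}: it first shows (Theorem \ref{thm:event}) that a tame endomorphism of a virtually polycyclic group is eventually virtually nilpotent, by letting the Hirsch lengths of $\varphi^n(\Gamma)$ stabilize and then absorbing the (finite) union of kernels, and then applies Proposition \ref{prop:sub} to $H=\varphi^n(\Gamma)$; torsion-freeness of $H$ is automatic because subgroups of torsion-free groups are torsion-free. You instead work with the \emph{quotient} by the stabilized kernel $K=\ker\varphi^N$, which is the route the paper uses for Corollary \ref{cor:virtnilp} via Proposition \ref{prop:quot} (from \cite{fm20-1}) --- except that there the quotient need not be torsion-free in general, which is presumably why the paper switches to subgroups for this statement. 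Your extra observation that $\Gamma/K$ \emph{is} torsion-free when $K$ is the stable kernel of a torsion-free group (since $\varphi^N(g)^m=e$ forces $\varphi^N(g)=e$) closes that gap cleanly, and your inductive proof of the quotient lemma is a correct, self-contained special case of Proposition \ref{prop:quot}; note the paper's one-step argument (from $z^{-1}x\varphi(z)y^{-1}\in K$ deduce $\varphi^n(x)\sim_\varphi\varphi^n(y)$ and use $x\sim_\varphi\varphi^n(x)$) avoids the induction entirely. Both routes end identically: \cite{dere22-1} gives virtual nilpotency and \cite{dd13-2} gives rationality. Your argument is arguably more direct for this particular corollary; the paper's detour through eventual virtual nilpotency buys a structural statement (Theorem \ref{thm:event}) of independent interest and sidesteps any discussion of torsion in quotients.
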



Afterwards, in Section \ref{sec:nil}, we initiate the study of Question \ref{QN1} for the special case of automorphisms on finitely generated torsion-free nilpotent groups, from now on called \TG s. We say that a group $G$ has the $R_\infty$-property if every automorphism has infinite Reidemeister number. Of course, groups with the  $R_\infty$-property do not admit a tame automorphism, but the converse is far from true, as we illustrate with some examples. Our results show that having a tame automorphism is preserved under commensurability, in contrast to the $R_\infty$-property. 

On the other hand, having a tame automorphism is not preserved by changing from one cocompact lattice in a $1$-connected nilpotent Lie group to another, so in particular, it is not a quasi-isometric invariant. We give several examples that show that the existence of a tame automorphism is closely related to the existence of an Anosov automorphism, making it a very restrictive condition on \TG s. Finally, we conclude this note with several questions that arise from the results and examples presented below.

\section{Reidemeister zeta function on virtually polycyclic groups}
\label{sec:poly}

In this section, we focus on the case of virtually polycyclic groups, to motivate the importance of Question \ref{QN1}. The first part combines the results of \cite{fm20-1} with the ones in \cite{dere22-1} to relate the Reidemeister zeta function on these groups to the ones on virtually nilpotent groups. We make this more concrete in the second part by showing that the Reidemeister zeta function is rational for the torsion-free groups in this class, by studying the invariant subgroups in more detail.

Recall that a \textbf{polycyclic group} $\Gamma$ is one that has by definition a subnormal series $$\{e\}= \Gamma_0 \triangleleft \Gamma_1 \triangleleft \ldots \triangleleft \Gamma_k = \Gamma$$ such that $\Gamma_{i+1}/\Gamma_i$ is cyclic for all $i= 0, \ldots, k-1$. Equivalently, a polycyclic group is a solvable group for which all subgroups are finitely generated. A virtually polycyclic group is a group containing a polycyclic group of finite index, and thus the class of virtually polycyclic group is closed under taking extensions, quotients and subgroups. Moreover, every virtually polycyclic group has a maximal torsion normal subgroup $T$, and as $T$ is virtually solvable, finitely generated and consists of torsion elements, it must be finite. Every finitely generated virtually nilpotent group is virtually polycyclic, and thus satisfies the same properties.

\subsection{Reidemeister number of quotients and subgroups} 
As mentioned in the introduction, we restrict our attention to monomorphisms in Question \ref{QN1} for finding useful restrictions on the group $G$. In order to deduce information about general morphisms we introduce two results of \cite{fm20-1}, allowing to take certain quotients or restrict to certain subgroups. The first tool is \cite[Lemma 4.1.]{fm20-1} dealing with invariant subgroups.
\begin{Prop}
	\label{prop:sub}
	Let $\varphi: G \to G$ be an endomorphism of a group $G$, and let $H$ be any subgroup of $G$ such that 
	\begin{enumerate}[(1)]
		\item $H$ is invariant under $\varphi$, i.e.~$\varphi(H) \subset H$;
		\item for all $x \in G$, there exists some $n > 0$ such that $\varphi^n(x) \in H$.
	\end{enumerate}
	If we denote by $\varphi_H: H \to H$ the restriction of $\varphi$ to $H$, then $R(\varphi) = R(\varphi_H)$. In particular, $\varphi$ is tame if and only if $\varphi_H$ is tame and in that case we have $$R_\varphi(z) = R_{\varphi_H}(z).$$
\end{Prop}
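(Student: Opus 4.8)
The plan is to prove the statement in two stages: first that the inclusion $H \hookrightarrow G$ induces a bijection between the twisted conjugacy classes of $\varphi_H$ and those of $\varphi$, and then to deduce everything about tameness and the zeta function formally. For the bijection, I would define the map sending a class $[h]_{\varphi_H}$ (for $h \in H$) to $[h]_\varphi$; hypothesis (1) guarantees $\varphi_H$ is a genuine endomorphism of $H$, so this is at least well-defined as a map on classes once we check that $h \sim_{\varphi_H} h'$ implies $h \sim_\varphi h'$, which is immediate from the definition since any conjugating element $z \in H$ also lies in $G$. Surjectivity is where hypothesis (2) enters: given any $x \in G$, pick $n > 0$ with $\varphi^n(x) \in H$, and use the standard identity $x \sim_\varphi \varphi(x)$ — more precisely $x \sim_\varphi x\,\varphi(x)^{-1}\cdot\varphi(x) = \ldots$, iterating to get $x \sim_\varphi \varphi^n(x)$ — to see that every $\varphi$-class meets $H$. (The identity $x \sim_\varphi \varphi(x)$ follows by taking $z = x$ in $x' = z\,x'\,\varphi(z)^{-1}$ applied to $x' = \varphi(x)$: indeed $\varphi(x) = x^{-1}\cdot x \cdot \varphi(x)$... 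I would write it as: with $y = \varphi(x)$ we have $x = e\cdot x\cdot \varphi(e)^{-1}$ is trivial, so instead note $x \sim_\varphi y$ with $z = x$: $z y \varphi(z)^{-1} = x\varphi(x)\varphi(x)^{-1} = x$, hence $x \sim_\varphi \varphi(x)$.)

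The main obstacle, and the only genuinely non-formal point, is \textbf{injectivity} of this map: if $h, h' \in H$ satisfy $h \sim_\varphi h'$ via some $z \in G$ — which need not lie in $H$ — I must produce a conjugating element inside $H$. The trick is again to push $z$ into $H$ using hypothesis (2): choose $n$ with $\varphi^n(z) \in H$. One shows that replacing $z$ by $\varphi(z)$ changes the twisted conjugation of $h$ into that of $\varphi$-conjugate elements in a controlled way; concretely, if $h = z h' \varphi(z)^{-1}$ then applying $\varphi$ and manipulating gives a relation between $h$, $h'$ and $\varphi(z)$, and iterating $n$ times expresses $h \sim_{\varphi_H} h'$ with conjugator built from $\varphi^n(z) \in H$ together with $h, h' \in H$. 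This is exactly the content of \cite[Lemma 4.1.]{fm20-1}, so I would either cite it directly or reproduce the short computation; in either case the combinatorics of chasing the conjugator through the iterates is the step requiring care.

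Finally, once $R(\varphi) = R(\varphi_H)$ is established, I apply it to every iterate: $\varphi^n$ satisfies the same two hypotheses with respect to $H$ (invariance is clear, and for the second condition, if $\varphi^m(x) \in H$ then $\varphi^{m'}(x) \in H$ for all $m' \ge m$ by invariance, so a multiple of $m$ works for $\varphi^n$), and $(\varphi^n)_H = (\varphi_H)^n$. Hence $R(\varphi^n) = R((\varphi_H)^n)$ for all $n > 0$. Tameness of $\varphi$ is therefore equivalent to tameness of $\varphi_H$, and when both hold the defining power series for $R_\varphi(z)$ and $R_{\varphi_H}(z)$ have identical coefficients, giving $R_\varphi(z) = R_{\varphi_H}(z)$.
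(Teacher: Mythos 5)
Your proof is correct and follows the standard approach: the paper itself gives no proof of this proposition (it only cites \cite[Lemma 4.1.]{fm20-1}), but your argument --- the inclusion $H \hookrightarrow G$ induces a bijection on twisted conjugacy classes, with the identity $x \sim_\varphi \varphi^n(x)$ used both to show every $\varphi$-class meets $H$ and to push an arbitrary conjugator $z \in G$ into $H$ via $\varphi^n(z)$ --- is exactly the mechanism the paper uses in its written-out proof of the companion Proposition \ref{prop:quot}. The reduction of tameness and the zeta-function identity to the single equality $R(\varphi^n) = R((\varphi_H)^n)$, via checking that $\varphi^n$ satisfies the same two hypotheses and that $(\varphi^n)_H = (\varphi_H)^n$, is also handled correctly.
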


One peculiar class of examples on which we can apply Proposition \ref{prop:sub} are the subgroups $H_n = \varphi^n(G) \subset G$ for $n > 0$. However, in general it cannot be applied to the subgroup $\displaystyle \bigcap_{n>0}\varphi^n(G)$, as it does not automatically satisfy the second condition. For example, the map $\varphi: \Z \to \Z$ given by $\varphi(x) = 2x$ has $\displaystyle \bigcap_{n > 0} \varphi^n(G) = \{e\}$, but as for every $n > 0$ the map $\varphi^n$ is injective, there is no $n > 0$ such that $\varphi^n(x) = e$ for $x \neq e$.

The second tool is an adaption from \cite[Theorem 4.5]{fm20-1}. We present it in this more general form, as this way it resembles the previous proposition. 
\begin{Prop}
	\label{prop:quot}
	Let $\varphi: G \to G$ be an endomorphism of a group $G$, and let $N$ be any normal subgroup such that \begin{enumerate}[(1)]
		\item $N$ is invariant under $\varphi$, i.e.~$\varphi(N) \subset N$;
		\item for all $x \in N$, there exists some $n > 0$ with $\varphi^n(x) = e$.
	\end{enumerate} Then $\varphi$ induces an endomorphism $\overline{\varphi}: G/N \to G/N$ with $R(\varphi) = R(\overline{\varphi})$. In particular, $\varphi$ is tame if and only if $\overline{\varphi}$ is tame, and in that case we have $$R_\varphi(z) = R_{\overline{\varphi}}(z).$$
\end{Prop}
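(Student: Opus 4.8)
The plan is to compare the twisted conjugacy classes of $\varphi$ and $\overline{\varphi}$ directly, via the quotient map $p : G \to G/N$. Condition (1) makes $\overline{\varphi}(gN) = \varphi(g)N$ a well-defined endomorphism of $G/N$, and $p$ satisfies the intertwining relation $p \circ \varphi = \overline{\varphi} \circ p$. From this, $p$ sends every $\varphi$-conjugacy class into a single $\overline{\varphi}$-conjugacy class, hence induces a map $\hat{p}$ between the (possibly infinite) sets of twisted conjugacy classes, and $\hat{p}$ is surjective because $p$ is. This already yields $R(\overline{\varphi}) \le R(\varphi)$ and uses only condition (1).

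The heart of the proof is that $\hat{p}$ is injective, and this is where condition (2) is needed. Assume $p(x) \sim_{\overline{\varphi}} p(y)$ and lift a conjugating element to some $z \in G$; since $z\,y\,\varphi(z)^{-1}$ is $\varphi$-conjugate to $y$, it suffices to treat the case $xN = yN$, i.e.\ $x = yn$ with $n \in N$. So everything reduces to the claim that $yn \sim_\varphi y$ for all $y \in G$ and all $n \in N$. I would look for the conjugating element $w$ inside $N$: writing $\gamma$ for conjugation by $y$ and $\theta = \gamma \circ \varphi$, a short rearrangement shows that $w\,y\,\varphi(w)^{-1} = yn$ is equivalent to the fixed-point equation $w = n_0\,\theta(w)$, where $n_0 = yny^{-1} \in N$.

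To solve this equation I would take $w$ to be the product $w = \prod_{j \ge 0} \theta^{j}(n_0)$; since $\theta$ is an endomorphism of $G$ preserving $N$, a one-line telescoping computation gives $w = n_0\,\theta(w)$ as soon as the product makes sense. And it does make sense because it is \emph{finite}: a short induction shows that $\theta^{k}(n_0)$ is conjugate in $G$ to $\varphi^{k}(n)$, so condition (2) applied to the single element $n$ — providing an $m$ with $\varphi^{m}(n) = e$ — forces $\theta^{k}(n_0) = e$ for every $k \ge m$. Hence $w = n_0\,\theta(n_0)\cdots\theta^{m-1}(n_0)$ is a genuine element of $N$, solves the equation, and witnesses $yn \sim_\varphi y$. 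Therefore $\hat{p}$ is a bijection and $R(\varphi) = R(\overline{\varphi})$.

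For the remaining statements, I would note that every iterate $\varphi^{n}$ satisfies (1) and (2) with the same $N$ (any exponent annihilating $x \in N$ under $\varphi$ also works under $\varphi^{n}$ after rounding up to a multiple of $n$), and that $\overline{\varphi^{n}} = \overline{\varphi}^{\,n}$. Applying the identity just proved to each iterate gives $R(\varphi^{n}) = R(\overline{\varphi}^{\,n})$ for all $n > 0$, so $\varphi$ is tame exactly when $\overline{\varphi}$ is, and then the power series defining $R_\varphi(z)$ and $R_{\overline{\varphi}}(z)$ agree term by term. I expect the main obstacle to be the injectivity of $\hat{p}$; the subtlety is that condition (2) is only a \emph{pointwise} statement (there need be no uniform bound on the exponent killing elements of $N$), which is precisely why the argument should handle one element $n$ at a time — as the terminating product above does — rather than all of $N$ at once.
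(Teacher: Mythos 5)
Your proposal is correct, and its overall frame (the projection $p$ induces a surjection on twisted conjugacy classes, with injectivity as the real content) matches the paper's. The injectivity step, however, is argued by a genuinely different route. The paper observes that $p(x)\sim_{\overline{\varphi}}p(y)$ gives $z^{-1}x\varphi(z)y^{-1}\in N$, applies condition (2) to get $\varphi^{n}\bigl(z^{-1}x\varphi(z)y^{-1}\bigr)=e$ for some $n$, deduces $\varphi^{n}(x)\sim_\varphi\varphi^{n}(y)$, and then closes with the standard identity $g\sim_\varphi\varphi(g)$ (witnessed by $z=g^{-1}$). You instead reduce to the single claim $yn\sim_\varphi y$ for $n\in N$ and solve the fixed-point equation $w=n_0\,\theta(w)$ with the terminating telescoping product $w=n_0\theta(n_0)\cdots\theta^{m-1}(n_0)$; your verification that $\theta^{k}(n_0)$ is conjugate to $\varphi^{k}(n)$ (via $c_k=y\varphi(y)\cdots\varphi^{k-1}(y)$) is exactly what makes the product finite, and it correctly handles the pointwise nature of condition (2). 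What each approach buys: the paper's argument is shorter and leans on the familiar fact $g\sim_\varphi\varphi(g)$, while yours is more constructive, exhibiting an explicit conjugator lying in $N$ and making transparent why no uniform bound on the annihilating exponent is needed. Your treatment of the iterates ($\overline{\varphi^{n}}=\overline{\varphi}^{\,n}$ and rounding the exponent up to a multiple of $n$) is also fine; the paper leaves that part implicit.
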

For completeness, we give its proof, although it is only a minor adaptation from the original one, which deals with a special normal subgroup that satisfies the conditions of the proposition.  

\begin{proof}
	As the normal subgroup $N$ is invariant under $\varphi$, the induced map $\overline{\varphi}$ is indeed well-defined. If we write $\pi: G \to G/N$ for the projection map, then we will show that $\pi$ induces a bijection between the twisted conjugacy classes of $\varphi$ and $\overline{\varphi}$. 
	
	For every $x, y \in G$ with $x \sim_\varphi y$, we clearly have that $\pi(x) \sim_{\overline{\varphi}} \pi(y)$. Hence, it remains to show that if $\pi(x) \sim_{\overline{\varphi}} \pi(y)$, that $x \sim_\varphi y$. For this, let $\pi(z) \in G/N$ be an element such that $$\pi(x) = \pi(z) \pi(y) \overline{\varphi}(\pi(z))^{-1} =\pi(z) \pi(y) \pi(\varphi(z))^{-1}  .$$ This implies that $z^{-1} x \varphi(z) y^{-1} \in N$. In particular, there exists $n > 0$ such that $$\varphi^n(z^{-1} x \varphi(z) y^{-1}) = e,$$ or thus such that $\varphi^n(x) \sim_\varphi \varphi^n(y)$. As $x \sim_\varphi \varphi^n(x)$ and $y \sim_\varphi \varphi^n(y)$, we conclude that the proposition holds.
\end{proof}

Hence, every Reidemeister zeta function $R_\varphi(z)$ is equal to one induced by a monomorphism $\overline{\varphi}: G / N \to G/N$, by taking $$N = \{x \in \Gamma \mid \exists n > 0: \varphi^n(x)= e\} = \displaystyle \bigcup_{n > 0} \ker(\varphi^n),$$ which obviously satisfies the conditions of the proposition. This motivates why we only consider monomorphisms in Question \ref{QN1}.

However, on virtually polycyclic groups this has consequences for the group structure, see \cite[Theorem 3.12.]{dere22-1}.

\begin{theorem}
	\label{thm:virtnil}
Let $\varphi: \Gamma \to \Gamma$ be a tame monomorphism on a virtually polycyclic group, then $\Gamma$ must be virtually nilpotent.
\end{theorem}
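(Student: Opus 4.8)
The plan is to combine the $\mathbb{Q}$-algebraic hull of $\Gamma$ with the observation that an injective endomorphism of a virtually polycyclic group necessarily has image of finite index. Let $\mathbf{H} = \mathbf{H}_\Gamma$ denote the algebraic hull: a $\mathbb{Q}$-defined linear algebraic group in which $\Gamma$ sits as a Zariski-dense subgroup, with unipotent radical $\mathbf{U}$, with $\mathbf{H}/\mathbf{U}$ diagonalizable, with $C_{\mathbf{H}}(\mathbf{U}) \subseteq \mathbf{U}$, and such that $N := \Gamma \cap \mathbf{U}$ is a finite-index subgroup of the Fitting subgroup of $\Gamma$ while $Q := \Gamma/N$ embeds Zariski-densely into $\mathbf{T} := \mathbf{H}/\mathbf{U}$. (After a routine reduction one may take $\Gamma$ torsion-free polycyclic, or one uses the hull in the virtually polycyclic setting directly.) The structural dichotomy to exploit is that $\Gamma$ is virtually nilpotent exactly when $\mathbf{T}$ is finite, equivalently when $Q$ is finite; so it suffices to show that if $Q$ is infinite then $R(\varphi^{k}) = \infty$ for some $k$.

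First I would lift $\varphi$ to the hull. Since $\varphi$ is injective, $\varphi(\Gamma) \cong \Gamma$ has the same Hirsch length as $\Gamma$, so $[\Gamma : \varphi(\Gamma)] < \infty$; hence $\varphi(\Gamma)$ is again Zariski-dense in $\mathbf{H}$ with the same algebraic hull, and by functoriality of the hull the isomorphism $\varphi \colon \Gamma \to \varphi(\Gamma)$ extends to an automorphism $\overline{\varphi}$ of $\mathbf{H}$ satisfying $\overline{\varphi}(\Gamma) = \varphi(\Gamma) \subseteq \Gamma$. As an automorphism, $\overline{\varphi}$ preserves $\mathbf{U}$, so $\overline{\varphi}(N) = \overline{\varphi}(\Gamma) \cap \mathbf{U} \subseteq \Gamma \cap \mathbf{U} = N$: the normal subgroup $N$ is $\varphi$-invariant. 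Thus $\varphi$ descends to an endomorphism $\psi$ of the abelian group $Q = \Gamma/N$, which is the restriction to $Q$ of the automorphism $\widetilde{\varphi}$ of $\mathbf{T}$ induced by $\overline{\varphi}$. Because twisted conjugacy classes surject onto twisted conjugacy classes under any $\varphi$-equivariant quotient, we get $R(\varphi^{n}) \ge R(\psi^{n})$ for every $n$.

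The heart of the matter is to prove that $\widetilde{\varphi}$ has finite order. Write $\rho \colon \mathbf{T} \to \GL(\mathfrak{u})$ for the conjugation action of $\mathbf{T}$ on $\mathfrak{u} = \operatorname{Lie}(\mathbf{U})$ and let $S \subseteq X^{\ast}(\mathbf{T})$ be the finite set of weights of $\mathbf{T}$ appearing in $\mathfrak{u}$; faithfulness of this action (which follows from $C_{\mathbf{H}}(\mathbf{U}) \subseteq \mathbf{U}$) forces $S$ to generate the character group $X^{\ast}(\mathbf{T})$. Reading the compatibility relation $\overline{\varphi}(t u t^{-1}) = \overline{\varphi}(t)\,\overline{\varphi}(u)\,\overline{\varphi}(t)^{-1}$ on the associated graded of $\mathfrak{u}$ with respect to its lower central series, where inner automorphisms of $\mathbf{U}$ act trivially, one finds that $\rho$ and $\rho \circ \widetilde{\varphi}$ are isomorphic as representations of $\mathbf{T}$ on each graded piece; comparing weights shows that the induced map $\widetilde{\varphi}^{\,\ast}$ permutes the finite set $S$. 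Consequently some power $\widetilde{\varphi}^{\,k}$ fixes $S$ pointwise, and since $S$ generates $X^{\ast}(\mathbf{T})$ this yields $\widetilde{\varphi}^{\,k} = \Id_{\mathbf{T}}$ and hence $\psi^{k} = \Id_{Q}$. If $\Gamma$ is not virtually nilpotent, $Q$ is an infinite abelian group, so $R(\psi^{k}) = R(\Id_{Q}) = |Q| = \infty$ and therefore $R(\varphi^{k}) = \infty$, contradicting that $\varphi$ is tame. Hence $\Gamma$ must be virtually nilpotent.

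The main difficulty, I expect, is not the weight count at the end — which is short and clean — but the two structural steps that precede it: handling the torsion (i.e.\ reducing to, or working directly over, the torsion-free case) when $\varphi$ is only assumed injective, and above all verifying that $\varphi$ extends to an automorphism of the algebraic hull. This last point is precisely where the monomorphism hypothesis is indispensable, through the finiteness of $[\Gamma : \varphi(\Gamma)]$. One can also phrase the whole argument without algebraic groups, working directly with the Fitting subgroup $F$ of $\Gamma$, its torsion-free sections and the induced actions of $\Gamma/F$; this needs the same key inputs ($\varphi(F) \subseteq F$ and the weight comparison) but may be the cleaner exposition for a note of this kind.
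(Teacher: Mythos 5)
You should first be aware that the paper does not actually prove Theorem \ref{thm:virtnil}: it is imported verbatim from the cited reference (Theorem 3.12 of \cite{dere22-1}), so there is no in-paper argument to compare against. Judged on its own merits, your proposal is a coherent and essentially correct strategy, and it is very much in the spirit of the cited proof: pass to the $\Q$-algebraic hull $\mathbf{H}$ with strong unipotent radical $\mathbf{U}$, extend $\varphi$ to an automorphism of $\mathbf{H}$ using the rigidity of the hull (which is exactly where $[\Gamma:\varphi(\Gamma)]<\infty$, hence injectivity, is used), and show that the induced automorphism of $\mathbf{H}/\mathbf{U}$ permutes a finite generating set of weights and therefore has finite order, forcing $R(\varphi^k)=\infty$ whenever $\Gamma/(\Gamma\cap\mathbf{U})$ is infinite. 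The weight-permutation step is correct as you state it, and the final count $R(\Id_Q)=|Q|=\infty$ is fine since $Q$ is an infinite finitely generated virtually abelian group.

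Three points deserve more than the wave of the hand they currently get. First, the ``routine reduction'': the algebraic hull exists only when the maximal finite normal subgroup $T$ is trivial, so you should quotient by $T$ (which is characteristic, hence $\varphi$-invariant) and check explicitly that the induced map is still injective (this uses that $\varphi|_T$ is a bijection of the finite group $T$, so $\varphi^{-1}(T)=T$) and still tame (via $R(\overline{\varphi}^n)\le R(\varphi^n)$). Second, faithfulness of the $\mathbf{T}$-action is given on $\mathfrak{u}$ itself by $C_{\mathbf{H}}(\mathbf{U})\subseteq\mathbf{U}$, but your weight comparison lives on the associated graded of $\mathfrak{u}$; to transfer faithfulness there you need that every element of $\mathbf{T}$ lifts to a semisimple element of $\mathbf{H}$ (via a splitting $\mathbf{H}=\mathbf{U}\rtimes\mathbf{T}'$), since a semisimple operator acting trivially on the graded pieces acts trivially on $\mathfrak{u}$. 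Third, for merely virtually polycyclic $\Gamma$ the quotient $\mathbf{H}/\mathbf{U}$ is only diagonalizable-by-finite, so the character-group argument should be run on the identity component, with a short extra remark that an automorphism of a finitely generated character module which is the identity on a finite-index subgroup has finite order. None of these is a genuine obstruction, but each is a place where a referee would ask you to fill in the details or to cite the precise statements from the Baues--Grunewald theory of algebraic hulls.
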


Combining the two previous results immediately implies the following.

\begin{corollary}
	\label{cor:virtnilp}
	The possible Reidemeister zeta functions $R_\varphi(z)$ for tame endomorphisms $\varphi:\Gamma \to \Gamma$ on virtually polycylic groups $\Gamma$ are exactly the possible Reidemeister zeta functions $R_{\varphi^\prime}(z)$ of tame monomorphisms $\varphi^\prime: \Gamma^\prime \to \Gamma^\prime$ on finitely generated virtually nilpotent groups $\Gamma^\prime$.
\end{corollary}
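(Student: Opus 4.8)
The plan is to prove the two inclusions separately, both of which are essentially bookkeeping on top of the three results just established: Proposition \ref{prop:sub}, Proposition \ref{prop:quot} and Theorem \ref{thm:virtnil}. For the inclusion ``$\subseteq$'', I would start with a tame endomorphism $\varphi: \Gamma \to \Gamma$ on a virtually polycyclic group $\Gamma$. First I would pass to the normal subgroup $N = \bigcup_{n>0} \ker(\varphi^n)$, which is $\varphi$-invariant and on which every element is killed by some power of $\varphi$, so Proposition \ref{prop:quot} applies and yields an induced \emph{monomorphism} $\overline{\varphi}: \Gamma/N \to \Gamma/N$ with $R_\varphi(z) = R_{\overline{\varphi}}(z)$; here I should check that $N$ is genuinely a subgroup (the kernels $\ker(\varphi^n)$ form an ascending chain since $\varphi$ is a homomorphism, so their union is a subgroup) and that it is normal. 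Since $\Gamma$ is virtually polycyclic, so is the quotient $\Gamma/N$. Now $\overline{\varphi}$ is a tame monomorphism on a virtually polycyclic group, so Theorem \ref{thm:virtnil} forces $\Gamma/N$ to be virtually nilpotent; being a quotient of a virtually polycyclic group it is also finitely generated. Hence $R_\varphi(z) = R_{\overline{\varphi}}(z)$ is realised by a tame monomorphism on a finitely generated virtually nilpotent group, as desired.

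For the reverse inclusion ``$\supseteq$'', I would simply observe that every finitely generated virtually nilpotent group is itself virtually polycyclic (this is recalled explicitly in the paragraph on polycyclic groups), and that a monomorphism is in particular an endomorphism. So any Reidemeister zeta function $R_{\varphi'}(z)$ of a tame monomorphism $\varphi': \Gamma' \to \Gamma'$ on a finitely generated virtually nilpotent group $\Gamma'$ is trivially one of a tame endomorphism on a virtually polycyclic group. Putting the two inclusions together gives the equality of the two sets of zeta functions.

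I do not expect any serious obstacle here: the corollary is a formal consequence of the preceding machinery, and the only points requiring a word of care are the verification that $N$ is a normal $\varphi$-invariant subgroup satisfying hypothesis (2) of Proposition \ref{prop:quot}, and the remark that virtual polycyclicity and finite generation of the quotient $\Gamma/N$ are inherited from $\Gamma$. One might also note in passing that Proposition \ref{prop:sub} is not strictly needed for this particular statement, although it is the natural companion result; if one prefers, an alternative route for the ``$\subseteq$'' direction is to first replace $\Gamma$ by $\varphi(\Gamma)$ repeatedly, but passing to the quotient by $N$ in one step is cleaner and is exactly what the discussion preceding the corollary sets up.
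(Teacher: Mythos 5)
Your proposal is correct and follows essentially the same route as the paper: quotient by $N = \bigcup_{n>0}\ker(\varphi^n)$ via Proposition \ref{prop:quot} to obtain a tame monomorphism on $\Gamma/N$, then invoke Theorem \ref{thm:virtnil} to conclude $\Gamma/N$ is virtually nilpotent, with the reverse inclusion being immediate since finitely generated virtually nilpotent groups are virtually polycyclic. The extra checks you flag (that $N$ is a normal $\varphi$-invariant subgroup satisfying the hypotheses, and that $\Gamma/N$ inherits the relevant finiteness properties) are exactly the right points of care, and the paper treats them as equally routine.
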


\begin{proof}
Note that every finitely generated virtually nilpotent group is automatically virtually polycyclic, so it is clear that the Reidemeister zeta functions on virtually polycyclic groups contain the ones from monomorphisms on finitely generated virtually nilpotent groups. 

To show that we find all Reidemeister zeta functions in this way, we start from a tame endomorphism $\varphi: \Gamma \to \Gamma$ on a virtually polycyclic group. By applying Proposition \ref{prop:quot} to the normal subgroup $N = \{x \in \Gamma \mid \exists n > 0: \varphi^n(x)= e\}$ as before, we find that $\overline{\varphi}: \Gamma/N \to \Gamma/N$ is a tame monomorphism and $R_\varphi(z) = R_{\overline{\varphi}}(z)$. Moreover, Theorem \ref{thm:virtnil} shows that the group $\Gamma/N$ must be virtually nilpotent, leading to the desired result.
\end{proof}

This shows that for Conjecture \ref{con} the focus should be on computing the Reidemeister zeta function on virtually nilpotent groups. As we will show in Section \ref{sec:nil}, having a tame automorphism is a restrictive condition on \TG s, reducing the number of groups for which we have to compute $R_\varphi(z)$. However, a more careful analysis using invariant subgroups allows us to deduce even more information of the known cases, as demonstrated in the next section.

\subsection{Structure of tame endomorphisms}

If $\Gamma$ is a polycyclic group with subnormal series $\Gamma_i \subset \Gamma$ for $i \in \{0, \ldots, k\}$ as in the definition, then we define the \textbf{Hirsch length} $h(\Gamma)$ of the group $\Gamma$ as the number of infinite cyclic quotients $\Gamma_{i+1}/\Gamma_i$. A standard argument shows that this does not depend on the choice of subnormal series. Similarly, we can define the Hirsch length for virtually polycyclic groups as the Hirsch length of a finite index polycyclic subgroup, and again this does not depend on the choice of subgroup. The Hirsch length satisfies the following properties elementary properties, see \cite[Section 1C]{sega83-1}.

\begin{Prop}
	\label{prop:hirsch}
	Let $\Gamma$ be a virtually polycyclic group with subgroup $\Gamma^\prime \subset \Gamma$, then the following statements hold. 
	\begin{enumerate}[(1)]
		\item $h(\Gamma^\prime) \leq h(\Gamma)$, with $h(\Gamma^\prime) = h(\Gamma)$ if and only if $\Gamma^\prime$ has finite index in $\Gamma$.
		\item $h(\Gamma) = 0$ if and only if $\Gamma$ is finite. 
		\item If $\Gamma^\prime$ is normal in $\Gamma$, then $h(\Gamma) = h(\Gamma^\prime) + h(\Gamma/\Gamma^\prime)$
	\end{enumerate}
\end{Prop}

%

We also need the following notion for endomorphisms.
\begin{Def}
Let $\mathcal{P}$ be a property of groups. We call an endomorphism $\varphi: G \to G$ \textbf{eventually $\mathcal{P}$} if there exists an $n > 0$ such that $\varphi^n(G)$ has property $\mathcal{P}$.
\end{Def}
\noindent For example, an endomorphism is called eventually abelian if there exists an $n >0$ such that $\varphi^n(G)$ is abelian. These type of endomorphisms have been used a lot to determine rationality of the Reidemeister zeta function, see \cite{fels00-2}.

In the case of virtually polycyclic groups, we have the following consequence of tameness.
\begin{theorem}
	\label{thm:event}
If $\varphi$ is a tame endomorphism of a virtually polycyclic group, then it is eventually virtually nilpotent.
\end{theorem}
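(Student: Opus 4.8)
The plan is to reduce $\varphi$ to a tame monomorphism on a quotient of $\Gamma$ and then invoke Theorem~\ref{thm:virtnil} together with the maximal condition on virtually polycyclic groups. Concretely, set $N = \bigcup_{n > 0} \ker(\varphi^n)$. The subgroups $\ker(\varphi^n)$ are normal in $\Gamma$, form an ascending chain, and satisfy $\varphi(\ker(\varphi^n)) \subseteq \ker(\varphi^n)$, so $N$ is a normal $\varphi$-invariant subgroup all of whose elements are killed by some iterate of $\varphi$. Thus Proposition~\ref{prop:quot} applies: the induced endomorphism $\overline{\varphi} : \Gamma/N \to \Gamma/N$ is tame because $\varphi$ is, and by the remark following that proposition $\overline{\varphi}$ is moreover injective. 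Since $\Gamma/N$ is again virtually polycyclic, Theorem~\ref{thm:virtnil} forces $\Gamma/N$ to be virtually nilpotent.

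It then remains to recognise $\Gamma/N$ as an image $\varphi^n(\Gamma)$. Here one uses that virtually polycyclic groups satisfy the maximal condition on subgroups: every polycyclic group has all of its subgroups finitely generated, i.e.\ is Noetherian, and this property is inherited by finite extensions. Applied to the ascending chain $\ker(\varphi) \subseteq \ker(\varphi^2) \subseteq \cdots$ it shows that the chain stabilises, say $\ker(\varphi^n) = \ker(\varphi^m)$ for all $n \geq m$; hence $N = \ker(\varphi^m)$. By the first isomorphism theorem $\varphi^n(\Gamma) \cong \Gamma / \ker(\varphi^n)$, and for $n \geq m$ the right-hand side is exactly $\Gamma/N$, which is virtually nilpotent. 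Therefore $\varphi^n(\Gamma)$ is virtually nilpotent for every $n \geq m$, which is precisely the statement that $\varphi$ is eventually virtually nilpotent.

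I do not expect a serious obstacle once the subgroup $N$ is chosen correctly; the two points that need care are verifying the (routine) hypotheses of Proposition~\ref{prop:quot} for this $N$, and using the Noetherian property to upgrade $N = \bigcup_{n>0} \ker(\varphi^n)$ to $N = \ker(\varphi^m)$ for a single $m$ --- without this one only obtains $\varphi^n(\Gamma) \cong \Gamma/\ker(\varphi^n)$, which need not be virtually nilpotent for any individual $n$.
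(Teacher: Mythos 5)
Your proof is correct, and it takes a genuinely different route from the paper's. The paper works entirely with subgroups: it considers $H_n = \varphi^n(\Gamma)$, uses the monotonicity of the Hirsch length (Proposition \ref{prop:hirsch}) to find $n_1$ where $h(H_n)$ stabilises, observes that the kernels of the restricted map then have Hirsch length $0$ and hence sit inside the finite maximal torsion normal subgroup of $H_{n_1}$ (which forces them to stabilise at some $n_2$), and finally applies Theorem \ref{thm:virtnil} directly to the subgroup $H' = \varphi^{n_1+n_2}(\Gamma)$, using Proposition \ref{prop:sub} to carry tameness along. You instead pass to the quotient $\Gamma/N$ with $N = \bigcup_{n>0}\ker(\varphi^n)$ via Proposition \ref{prop:quot}, conclude virtual nilpotency of $\Gamma/N$ from Theorem \ref{thm:virtnil}, and then convert back to the subgroup picture using the Noetherian property of virtually polycyclic groups together with the first isomorphism theorem $\varphi^n(\Gamma) \cong \Gamma/\ker(\varphi^n)$. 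Your final step is the key insight the naive quotient argument misses, and you identify it correctly: without the stabilisation $N = \ker(\varphi^m)$ one cannot identify $\Gamma/N$ with any individual image $\varphi^n(\Gamma)$. What each approach buys: yours is shorter and replaces the two-stage Hirsch-length/torsion argument by a single appeal to the maximal condition on subgroups; the paper's subgroup-only route avoids quotients altogether, which fits its stated philosophy that subgroups of a given $\Gamma$ are easier to control than its quotients, and it makes explicit that one may take $H = \varphi^n(\Gamma)$ with $\varphi_H$ injective.
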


\begin{proof}
Let $\varphi$ be a tame endomorphism of a virtually polycyclic group $\Gamma$. Consider the sequence of subgroups $H_n = \varphi^n(\Gamma) \subset \Gamma$, satisfying $H_{n+1} \subset H_n$ and $\varphi(H_n) = H_{n+1}$. As $$h(H_{n+1}) = h(\varphi(H_{n})) \leq h(H_{n})$$ by Proposition \ref{prop:hirsch}, it must hold that there exists $n_1 > 0$ such that $h(H_{n_1}) = h(H_{n_1+n})$ for all $n \geq 0$. Note that, although this is not important for our proof, $n_1$ can be taken as the first value for which $h(H_{n_1}) = h(H_{n_1+1})$. 

We now take $H = H_{n_1}$ as the subgroup for Proposition \ref{prop:sub}. By construction we have that $\varphi_H^n(H) = H_{n_1+n} \subset H$, with both groups having the same Hirsch length. In particular, $\ker(\varphi_H^n)$ has Hirsch length $0$ for every $n > 0$ or thus is a finite normal subgroup of $H$. The normal subgroup $$N = \bigcup_{n> 0} \ker(\varphi_H^n)$$ hence lies completely in the maximal torsion normal subgroup $T$ of $H$, which is finite for virtually polycyclic groups. In particular, there exists $n_2$ such that $N = \ker(\varphi^{n_2}_H)$.  

Write $H^\prime = \varphi^{n_1+n_2}(\Gamma) = \varphi^{n_2}(H)$. It is now easy to see that $\varphi_{H^\prime}$ is a tame monomorphism on the subgroup $H^\prime$, and thus that $H^\prime$ is virtually nilpotent by Theorem \ref{thm:virtnil}. We conclude that $\varphi$ is eventually virtually nilpotent.
\end{proof}
\noindent From the proof it follows that there even exists $n>0$ such that $H = \varphi^n(\Gamma)$ is virtually nilpotent and $\varphi_H$ is injective. The advantage compared to Corollary \ref{cor:virtnilp} is that it is often easier to analyze the possible subgroups of a given group $\Gamma$ than all its quotients.

The converse of Theorem \ref{thm:event} is clearly not true, as we illustrate with the following example. 

\begin{Ex}
Consider the torsion-free polycyclic group $\Gamma = \Z^2 \rtimes_A \Z$ with matrix 
$$ A = \begin{pmatrix}
	2 & 1 \\ 1 & 1 \end{pmatrix}.$$ 
It is clear that the endomorphism $\varphi: \Gamma \to \Gamma$ defined by $\varphi(z,t) = (0,t)$ for $z\in \Z^2, t \in \Z$ is eventually virtually nilpotent, even eventually abelian, as $$H = \varphi(\Gamma) = \langle ((0,0),1) \rangle \approx \Z$$ is abelian. Moreover, $\varphi_H = \Id_H$ and thus $R(\varphi) = R(\varphi_H) = \infty$, meaning the endomorphism $\varphi$ is not tame.
\end{Ex}

The proof of Theorem \ref{thm:torsionfree} is now a consequence of the previous.

\begin{corollary}
	\label{cor:torsionfree}
	Let $\varphi$ be a tame endomorphism on a torsion-free virtually polycyclic group $\Gamma$, then the Reidemeister zeta function $R_\varphi(z)$ is rational.
\end{corollary}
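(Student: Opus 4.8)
The plan is to reduce the statement to the known rationality results on finitely generated virtually nilpotent (in fact torsion-free nilpotent) groups via the structural reductions already established. Concretely, let $\varphi: \Gamma \to \Gamma$ be a tame endomorphism of a torsion-free virtually polycyclic group $\Gamma$. First I would apply the argument in the proof of Theorem \ref{thm:event}: there exists $n > 0$ such that $H = \varphi^n(\Gamma)$ satisfies the hypotheses of Proposition \ref{prop:sub} (it is $\varphi$-invariant, and every element of $\Gamma$ maps into it after finitely many iterations of $\varphi$ — indeed after exactly $n$), so $R_\varphi(z) = R_{\varphi_H}(z)$, and moreover $\varphi_H$ is a \emph{monomorphism} and $H$ is virtually nilpotent. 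Since $H$ is a subgroup of the torsion-free group $\Gamma$, it is itself torsion-free; hence $H$ is a finitely generated torsion-free virtually nilpotent group, i.e.\ an almost-Bieberbach group.

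Now I would invoke the rationality result of \cite{dd13-2}: for any endomorphism (in particular the monomorphism $\varphi_H$) of a finitely generated torsion-free virtually nilpotent group, the Reidemeister zeta function is rational whenever it is defined. Since $\varphi_H$ is tame (because $\varphi$ is, by Proposition \ref{prop:sub}), $R_{\varphi_H}(z)$ is well-defined and rational, and therefore $R_\varphi(z) = R_{\varphi_H}(z)$ is rational as well. This immediately yields Theorem \ref{thm:torsionfree} for endomorphisms, since that is precisely the statement of the corollary.

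I do not expect any serious obstacle here: all the work has been front-loaded into Theorem \ref{thm:event} (which does the passage to an invariant subgroup on which $\varphi$ restricts to a monomorphism onto a virtually nilpotent group) and into the cited rationality theorem for almost-Bieberbach groups. The only points requiring a line of care are (i) noting that a subgroup of a torsion-free group is torsion-free, so that $H$ lands in the class covered by \cite{dd13-2}, and (ii) checking that tameness transfers along Proposition \ref{prop:sub} so the cited theorem applies — but both are immediate from the statements already available. An alternative, essentially equivalent route would go through Corollary \ref{cor:virtnilp} (passing to the quotient $\Gamma/N$ instead of the subgroup $H$), but the subgroup version via Theorem \ref{thm:event} is cleaner since subgroups of torsion-free groups are visibly torsion-free, whereas one would have to argue torsion-freeness of the quotient separately.
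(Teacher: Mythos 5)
Your proposal is correct and follows essentially the same route as the paper: apply Theorem \ref{thm:event} to obtain a virtually nilpotent image $H = \varphi^n(\Gamma)$, note that $H$ is torsion-free as a subgroup of $\Gamma$, identify $R_\varphi(z) = R_{\varphi_H}(z)$ via Proposition \ref{prop:sub}, and conclude by the rationality result of \cite{dd13-2}. The extra observation that $\varphi_H$ can be taken to be a monomorphism is harmless but not needed, since \cite{dd13-2} covers arbitrary endomorphisms of finitely generated torsion-free virtually nilpotent groups.
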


\begin{proof}
By Theorem \ref{thm:event}, we know that $\varphi$ is eventually virtually nilpotent, so there exists $n > 0$ such that $H = \varphi^n(\Gamma)$ is virtually nilpotent. Moreover $H$ is torsion-free as a subgroup of $\Gamma$. As $R_\varphi(z) = R_{\varphi_H}(z)$, it must be rational as well by \cite{dd13-2}. 
\end{proof}

Using a similar method as in Corollary \ref{thm:torsionfree}, one could make statements using the Hirsch length of the group $\Gamma$. For example, if it would be known that all virtually nilpotent groups up to Hirsch length $3$ have a rational Reidemeister zeta function, a consequence would be that all virtually polycylic groups up to Hirsch length $4$ that are not virtually nilpotent have a rational Reidemeister zeta function. However, the first statement is not yet known in full generality, as \cite{dtv18-1} only deals with automorphisms on almost-crystallographic groups up to Hirsch length $3$.

\begin{Rmk}
As a final remark in this section, we want to make the connection with Nielsen zeta functions for self-maps on manifolds, and results on their rationality as in \cite{dv21-1}. As mentioned in the introduction, to every self-map $f: M \to M$ one can associated a number $N(f) \in \N$, called the Nielsen number. As this number is always finite, we can define for any self-map a Nielsen zeta function, just as we did for the Reidemeister zeta function. On the other hand, these fundamental groups are always torsion-free, and thus the class of groups to consider is smaller.
	
	In the case of infra-solvmanifolds, a class of manifolds having a torsion-free and virtually polycyclic fundamental group, it holds that $N(f) = R(f)$ if $R(f) < \infty$, see for example \cite[Proposition 3.1.3.]{dv21-1}. In particular, if $f$ induces a tame endomorphism on the fundamental group, the Nielsen zeta function and the Reidemeister zeta function are the same and thus rational via Theorem \ref{thm:torsionfree}. However, this does not imply that every Nielsen zeta function is rational, as not every endomorphism is tame.	
\end{Rmk}

\section{Tame automorphisms on \TG s}

\label{sec:nil}

As explained in the previous section, understanding the groups that admit a tame monomorphism is important to understand the behavior of Reidemeister zeta functions. In order to prove Conjecture \ref{con}, you need a full description in the case of virtually nilpotent groups by Corollary \ref{cor:virtnilp}. In this section we initiate this study for the special case of tame automorphisms on \TG s, which were defined as follows in the introduction.
\begin{Def}
A  \textbf{\TG}~is a finitely generated torsion-free nilpotent group.
\end{Def}
\noindent We will in fact show that only few \TG s have a tame automorphism.

We first introduce rational and real Mal'cev completions, and then prove how tame automorphisms depend on these completions. Afterwards, we compute some examples to show that in many cases, the existence of a tame automorphism is equivalent to the existence of an Anosov automorphism. We end the section with several open questions that arise from this note.

\subsection{Mal'cev completions}

One of the main tools for studying Reidemeister numbers on \TG s are the so-called Mal'cev completions, which exist for any field $F$ of characteristic $0$. However, for our purposes, $F$ will be either the field of rational numbers $\Q$ or the field of real numbers $\R$. There are different constructions, all leading to the same class of groups, and we refer to \cite{deki18-1} for more details about the construction we consider below.

Given a \TG~$N$, then it can be embedded as a cocompact lattice into a simply connected and connected (abbreviated as $1$-connected) nilpotent Lie group $N^\R$. The Lie group $N^\R$ is called the  \textbf{real Mal'cev completion} of $N$. Let $\lien^\R$ be the corresponding Lie algebra, then it is well-known that the exponential map $\exp: \lien^\R \to N^\R$ is a bijection, and thus we can define its inverse function, namely $\log: N^\R \to \lien^\R$. Consider the subset $\log(N) \subset \lien^\R$, then by consequence of the Baker-Campbell-Hausdorff formula the rational span $\Q \log(N) = \lien^\Q$ forms a Lie algebra over the rational numbers. The corresponding group $N^\Q = \exp(\lien^\Q)$ is called the \textbf{rational Mal'cev completion} of $N$.

Every monomorphism $\varphi: N \to N$ uniquely extends to an automorphism of $N^F$, with $F$ either $\Q$ or $\R$ from now on. Moreover, this shows that the rational and real Mal'cev completion $N^\Q$ and $N^\R$ are unique up to isomorphism. The automorphisms of $\lien^F$ as a Lie algebra correspond under the exponential map to the automorphisms of $N^F$ as a group, and from now on we will identify these maps. Hence it makes sense to talk about the characteristic polynomial or eigenvalues of an automorphism on $N^F$ by using this correspondence.  Similarly, we can define these notions for an automorphism $\varphi: N \to N$ using the unique extension to  $N^F$, where the choice of field $F$ does not influence the outcome. As this will be convenient later, we consider eigenvalues as the (complex) roots of the characteristic polynomial, counted with their algebraic multiplicity, even if they are not elements of the field $F$ we are working with. 

The following result of \cite{roma11-1}, although written in a slightly different context, is useful to determine whether an automorphism $\varphi \in \Aut(N)$ has infinite Reidemeister number.
\begin{Prop}
	\label{prop:eig}
	Let $\varphi: N \to N$ be an automorphism on a \TG~$N$, and $\varphi^F$ the unique extension to the Mal'cev completion over $F$, where $F = \Q$ or $F= \R$. Then $R(\varphi) = \infty$ if and only if $\varphi^F$ has $1$ as eigenvalue.
\end{Prop}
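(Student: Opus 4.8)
The plan is to translate the statement into a fixed-point/coincidence-type criterion for twisted conjugacy on the nilpotent Lie group, where the algebra becomes transparent. First I would reduce to the Mal'cev completion: by the functoriality of twisted conjugacy, twisted conjugacy classes of $\varphi$ on $N$ inject into those of $\varphi^F$ on $N^F$, so at the very least $R(\varphi^F) = \infty$ forces $R(\varphi) = \infty$ — but the sharper statement needs the known fact (see \cite{roma11-1}) that $R(\varphi)$ is finite if and only if the map $N^F \to N^F$, $g \mapsto g^{-1}\varphi^F(g)$, is surjective, equivalently the Reidemeister number is governed entirely by the Lie-algebra automorphism. So the real work is to characterize when $x \mapsto \tau_\varphi(x) := x^{-1}\varphi(x)$ is onto $N^F$ in terms of eigenvalues of $\varphi^F$ (identified, via $\exp$, with a Lie algebra automorphism $\varphi_\ast$ of $\lien^F$).

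Next I would run an induction on the nilpotency class (equivalently on the length of the upper central series of $\lien^F$). The base case is abelian: $N^F \cong (\lien^F, +)$, twisted conjugacy is the linear action $x \mapsto x - \varphi_\ast x$, and $R(\varphi) = \infty$ exactly when $\Id - \varphi_\ast$ is not invertible, i.e. when $1$ is an eigenvalue of $\varphi_\ast$ — this is classical. For the inductive step, let $Z = Z(N^F)$ be the center (a $\varphi^F$-invariant subgroup) and $\overline{N} = N^F/Z$ with induced automorphism $\overline{\varphi}$. The twisted-conjugacy exact sequence gives: if $R(\overline{\varphi}) = \infty$ then $R(\varphi^F) = \infty$; and if $R(\overline{\varphi}) < \infty$, then $R(\varphi^F) < \infty$ iff the "fiberwise" map $\Id - \varphi_\ast|_Z$ on $Z$ is surjective, i.e. iff $1$ is not an eigenvalue of $\varphi_\ast|_Z$. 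Since the eigenvalues of $\varphi_\ast$ on $\lien^F$ are the (multiset) union of those on $\mathfrak{z}$ and those on $\lien^F/\mathfrak{z}$, combining the inductive hypothesis for $\overline{\varphi}$ with the center case yields: $R(\varphi^F) = \infty$ iff $1$ is an eigenvalue of $\varphi_\ast$ on $\lien^F$. The point is that $Z$ is central, so the usual obstruction to lifting twisted conjugacy (the non-abelian $H^1$ term) vanishes and the fiberwise analysis is genuinely linear.

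The main obstacle is making precise the "fiberwise" step: given that $\overline{x} \mapsto \overline{x}^{-1}\overline{\varphi}(\overline{x})$ hits all of $\overline{N}$, one must show the twisted conjugacy classes of $\varphi^F$ in a fixed fiber over a class of $\overline{\varphi}$ are parametrized by the cokernel of $\Id - \varphi_\ast|_{\mathfrak{z}}$, and that this cokernel is independent of the fiber. This requires care with the twisted action of the stabilizer (in $N^F$) of a point of $\overline{N}$ on the central fiber $Z$; here one uses that $Z$ is central so the action is by the linear map $\Id - \varphi_\ast|_{\mathfrak{z}}$ composed with a translation, hence all cosets give the same cokernel. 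An alternative, perhaps cleaner, route is to invoke directly the characterization $R(\varphi^F) < \infty \iff \tau_{\varphi^F}$ surjective, and prove surjectivity of $\tau_{\varphi^F}$ on $N^F$ by the same central induction: surjectivity downstairs plus surjectivity of $\Id - \varphi_\ast|_{\mathfrak z}$ on the center gives surjectivity upstairs (lift a target element mod $Z$, then correct in the center), while a nontrivial kernel of $\Id - \varphi_\ast$ on some central subquotient obstructs surjectivity. Either way, the eigenvalue bookkeeping at the end is routine once the central reduction is set up correctly.
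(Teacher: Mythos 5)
First, a point of comparison: the paper does not actually prove Proposition \ref{prop:eig} --- it is quoted from \cite{roma11-1}, with only the remark that the original argument runs over the torsion-free quotients of a central series of $N$. Your plan (induction on the nilpotency class, splitting off the center, reducing the fiber analysis to the linear map $\Id - \varphi_\ast$ on the center) is exactly that standard strategy, so in outline you are reconstructing the cited proof rather than diverging from it. The inductive step itself is essentially sound: when $R(\overline{\varphi})<\infty$, the inductive hypothesis gives that $1$ is not an eigenvalue on the quotient, hence the fixed-point group of $\overline{\varphi}$ is trivial (it is $\exp$ of $\ker(\overline{\varphi}_\ast - \Id)$), and this triviality is what kills the extra stabilizer identifications you worry about, so each fiber is exactly the cokernel of $\Id - \varphi_\ast$ restricted to the center.

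The genuine gap is in your very first reduction, the passage between $N$ and $N^F$. The claim that twisted conjugacy classes of $\varphi$ on $N$ inject into those of $\varphi^F$ on $N^F$ is false: for $N=\Z$ and $\varphi=-\Id_\Z$ one has $R(\varphi)=|\Z/2\Z|=2$ while $R(\varphi^\R)=1$, so distinct classes in $N$ merge in $N^F$ (and the class map need not be surjective either, as $\varphi=\Id_\Z$ shows). The inference you draw is also reversed: injectivity would give $R(\varphi)\le R(\varphi^F)$, not that $R(\varphi^F)=\infty$ forces $R(\varphi)=\infty$. More seriously, the ``known fact'' you then invoke --- that $R(\varphi)<\infty$ iff $g\mapsto g^{-1}\varphi^F(g)$ is onto $N^F$ --- is not an available input but essentially the theorem itself: the whole difficulty is that Reidemeister numbers are not preserved under Mal'cev completion, as the example above shows, so nothing about $R(\varphi)$ can be read off from $N^F$ without an argument. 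The fix is to run your central induction on $N$ itself, using the upper central series of $N$ (whose successive quotients are finitely generated and torsion-free, hence isomorphic to $\Z^{n_i}$): the base case is $R(A)=|\mathrm{coker}(\Id - A)|$ on $\Z^{n}$, which is finite iff $\det(\Id - A)\neq 0$ iff $1$ is not an eigenvalue, and your fiberwise step then goes through verbatim on the discrete group. The completion enters only to define the eigenvalues, not to compute $R(\varphi)$.
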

\noindent The original proof of \cite{roma11-1} uses the torsion-free quotients of a central series of the nilpotent group $N$, but the eigenvalues correspond to our definition. Note that there are also formulas to compute $R(\varphi)$ from the eigenvalues of $\varphi^F$, but these are not important to us.

In particular, if an automorphism $\varphi \in \Aut(N)$ has no eigenvalues of absolute value $1$, then $R(\varphi) < \infty$. As this property on the eigenvalues is preserved under taking powers $\varphi^n$, we get that $R(\varphi^n) < \infty$ for all $n > 0$ or thus that $\varphi$ is tame. Such an automorphism is called \textbf{Anosov}, because it is closely related to the study of Anosov diffeomorphisms on closed manifolds, see \cite{deki99-1}.

\begin{corollary}
	\label{cor:anotame}
	Every Anosov automorphism on a \TG~$N$ is tame.
\end{corollary}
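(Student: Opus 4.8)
The plan is to deduce the statement directly from Proposition \ref{prop:eig} applied to each iterate of $\varphi$. First I would recall the definition: an Anosov automorphism $\varphi \in \Aut(N)$ is one whose unique extension $\varphi^F$ to the Mal'cev completion $N^F$ (equivalently, the induced automorphism of the Lie algebra $\lien^F$) has no eigenvalue of absolute value $1$. The key structural point is that forming the Mal'cev extension is compatible with composition: for every $n > 0$ the unique extension of $\varphi^n$ to $N^F$ is exactly $(\varphi^F)^n$, since $(\varphi^F)^n$ is an automorphism of $N^F$ that restricts to $\varphi^n$ on $N$, and such an extension is unique.

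Next I would invoke the behaviour of eigenvalues under taking powers. If $\lambda_1, \ldots, \lambda_k$ are the eigenvalues of $\varphi^F$ — the complex roots of its characteristic polynomial, counted with algebraic multiplicity, as fixed in the conventions above — then the eigenvalues of $(\varphi^F)^n$ are $\lambda_1^n, \ldots, \lambda_k^n$. Since $\varphi$ is Anosov, $|\lambda_i| \neq 1$ for all $i$, hence $|\lambda_i^n| = |\lambda_i|^n \neq 1$ for all $i$ and all $n > 0$; in particular $1$ does not occur among the eigenvalues of $(\varphi^F)^n = (\varphi^n)^F$. Applying Proposition \ref{prop:eig} to the automorphism $\varphi^n$ then gives $R(\varphi^n) < \infty$, and as $n > 0$ was arbitrary this is precisely the statement that $\varphi$ is tame.

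I do not expect a genuine obstacle here: the argument is essentially a one-line consequence of Proposition \ref{prop:eig} once the eigenvalue bookkeeping is in place. The only points that deserve a moment's care are the compatibility of the Mal'cev extension with composition (so that one may pass from $\varphi$ to $\varphi^n$ at the level of $N^F$) and the independence of the eigenvalues from the choice of field $F \in \{\Q, \R\}$, both of which follow from the uniqueness of the extension established earlier in this section.
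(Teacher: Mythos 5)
Your argument is correct and is essentially the paper's own: the corollary is justified in the paragraph preceding it by exactly this reasoning, namely that an Anosov automorphism has no eigenvalues of absolute value $1$, this property passes to all powers since the eigenvalues of $(\varphi^F)^n$ are the $n$-th powers of those of $\varphi^F$, and Proposition \ref{prop:eig} then gives $R(\varphi^n) < \infty$ for every $n > 0$. Your extra care about the compatibility of the Mal'cev extension with composition is a sensible (if routine) point that the paper leaves implicit.
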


Note that being Anosov is stronger than being tame, as Proposition \ref{prop:eig} shows that the latter is equivalent to the weaker condition of having no roots of unity as eigenvalues. As we will illustrate in the final part of this section, there is no known example of a \TG~$N$ without Anosov automorphism but with a tame automorphism.

\subsection{Commensurable groups}

In this section, $N$ will denote a \TG~and $N^\Q$ its rational Mal'cev completion with Lie algebra $\lien^\Q$. We will show that the existence of a tame automorphism on $N$ only depends on $N^\Q$. 

The rational Mal'cev completion also has a completely algebraic description. It is the unique torsion-free radicable group $N^\Q$ containing $N$ as a subgroup such that each element $x \in N^\Q$ has some power $x^n$ with $n>0$ such that $x^n \in N$. Here, radicable means that for every element $x \in N^\Q$ and every $n > 0$, there exists a (necessarily unique for torsion-free nilpotent groups) $y \in N^\Q$ with $y^n = x$. We can now introduce the following concept.

\begin{Def}
We call a finitely generated subgroup $M \subset N^\Q$ a \textbf{full subgroup} if $N^\Q$ is also the rational Mal'cev completion of $M$. 
\end{Def}

\noindent It is immediate that every finitely generated subgroup $M$ of $N^\Q$ is a \TG, so the rational Mal'cev completion of $M$ is indeed defined. As $N^\Q$ is already a torsion-free radicable group containing $M$, the only condition to be checked is that every element $x \in N^\Q$ has some positive power $n > 0$ such that $x^n \in M$. 

Recall that two groups $G_1, G_2$ are called (abstractly) commensurable if and only if there exist finite index subgroups $G_1^\prime \subset G_1, G_2^\prime \subset G_2$ such that $G_1^\prime \approx G_2^\prime$. Using this relation, we can describe the algebraic structure of full subgroups of $N^\Q$.

\begin{theorem}
	Let $M$ and  and  be \TG s, then the groups $M$ and $N$ are commensurable if and only if the rational Mal'cev completions are isomorphic, i.e.~$M^\Q \approx N^\Q$. 
\end{theorem}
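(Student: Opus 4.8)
The plan is to prove both implications separately, using the algebraic characterization of the rational Mal'cev completion recalled just above the statement: $N^\Q$ is the unique torsion-free radicable group containing $N$ in which every element has a positive power landing in $N$.

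For the "only if" direction, suppose $M$ and $N$ are commensurable, so there are finite index subgroups $M' \subset M$ and $N' \subset N$ with an isomorphism $f: M' \to N'$. First I would observe that a finite index subgroup of a \TG~has the same rational Mal'cev completion: if $M'$ has finite index in $M$, then for each $x \in M$ some power $x^k$ lies in $M'$ (take $k$ to be the order of the coset, or use that the intersection of conjugates gives a finite index normal subgroup), hence every element of $M^\Q$ has a power in $M'$; since $M^\Q$ is torsion-free radicable and contains $M'$, uniqueness gives $(M')^\Q \cong M^\Q$, and likewise $(N')^\Q \cong N^\Q$. Then the isomorphism $f: M' \to N'$ extends to an isomorphism of rational Mal'cev completions (via the functoriality already noted in the excerpt: every monomorphism $\varphi: N \to N$ extends uniquely to an automorphism of $N^\Q$, and the same construction extends any isomorphism of \TG s to an isomorphism of their completions). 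Composing, $M^\Q \cong (M')^\Q \cong (N')^\Q \cong N^\Q$.

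For the "if" direction, suppose $\Phi: M^\Q \to N^\Q$ is an isomorphism. Then $\Phi(M)$ is a finitely generated subgroup of $N^\Q$, and it is a full subgroup: since $M$ is full in $M^\Q$ (every element of $M^\Q$ has a power in $M$) and $\Phi$ is an isomorphism, every element of $N^\Q$ has a power in $\Phi(M)$. So it suffices to show that any two full subgroups $M_1, M_2$ of a fixed $N^\Q$ are commensurable; then applying this to $\Phi(M) \cong M$ and $N$ inside $N^\Q$ finishes the argument. To show two full subgroups $M_1, M_2$ are commensurable, consider $M_1 \cap M_2$. I would argue this has finite index in each: working in the Lie algebra $\lien^\Q = \log(M_1^\Q)$ via the Baker-Campbell-Hausdorff correspondence, the fullness condition says $\Q \cdot \log(M_i) = \lien^\Q$, so each $\log(M_i)$ spans $\lien^\Q$ as a $\Q$-vector space; picking a basis of $\lien^\Q$ from $\log(M_1)$ and clearing denominators against a similar basis from $\log(M_2)$ shows the additive groups generated are commensurable lattices, and a standard induction up a central series (using that commensurability of \TG s is detected by commensurability of the abelianizations together with the centers, or more directly by Hirsch length via Proposition \ref{prop:hirsch}) upgrades this to $[M_i : M_1 \cap M_2] < \infty$. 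Hence $M_1$ and $M_2$ are commensurable via the common subgroup $M_1 \cap M_2$.

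The main obstacle is the last step: showing that the intersection of two full subgroups has finite index in each. The subtlety is that $M_1 \cap M_2$ need not be "obviously" large at the level of a generating set — one really has to pass through the Lie algebra and argue lattice-theoretically, or set up a clean induction on the Hirsch length using Proposition \ref{prop:hirsch}(3) applied to the lower central series (whose terms are characteristic, hence respected by the inclusions $M_i \subset N^\Q$). A convenient packaging is: for \TG s $M_1, M_2$ sharing the rational Mal'cev completion $N^\Q$, both have Hirsch length equal to $\dim_\Q \lien^\Q$, so $M_1 \cap M_2 \subset M_1$ with $h(M_1 \cap M_2) = h(M_1)$ forces finite index by Proposition \ref{prop:hirsch}(1) — provided one first checks $M_1\cap M_2$ is itself full (which follows because it contains, for each $i$ and each basis element $\log(x) \in \lien^\Q$ with $x \in M_i$, a suitable power lying in both $M_1$ and $M_2$). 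I would phrase the write-up around this Hirsch-length shortcut to keep the computation minimal.
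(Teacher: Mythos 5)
The paper states this theorem without proof---it is quoted as a classical fact about \TG s (see e.g.\ the reference \cite{deki18-1} given for Mal'cev completions)---so there is no in-paper argument to compare against; judged on its own, your proof is essentially the standard one and is correct in outline. Both directions work: for ``only if'' the normal core argument gives that every element of $M$ has a power in the finite index subgroup $M'$, whence $(M')^\Q\cong M^\Q$ by uniqueness of the completion, and the isomorphism $M'\to N'$ extends to the completions by the same universal property the paper invokes for monomorphisms. For ``if'', the reduction to showing that any two full subgroups $M_1,M_2$ of $N^\Q$ are commensurable is the right move, and your final ``Hirsch-length packaging'' is the clean way to finish: $M_1\cap M_2$ is finitely generated (being a subgroup of the polycyclic group $M_1$), it is full, hence $h(M_1\cap M_2)=\dim_\Q\lien^\Q=h(M_i)$ and Proposition \ref{prop:hirsch}(1) gives finite index. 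Two spots should be tightened. First, your justification of fullness of $M_1\cap M_2$ via ``basis elements $\log(x)$'' is muddled; the correct and simpler argument is that for any $x\in N^\Q$ there are $n_1,n_2>0$ with $x^{n_1}\in M_1$ and $x^{n_2}\in M_2$, and since powers of a single element commute, $x^{n_1n_2}=(x^{n_1})^{n_2}=(x^{n_2})^{n_1}\in M_1\cap M_2$. Second, the middle passage about clearing denominators of bases and running ``a standard induction up a central series'' is both vague and superfluous once you adopt the Hirsch-length shortcut---you should delete it rather than leave two half-arguments in parallel; you do still need to record (or cite) the fact that a full subgroup has Hirsch length equal to $\dim_\Q\lien^\Q$, which follows from the lower central series and Proposition \ref{prop:hirsch}(3).
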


\noindent So up to isomorphism, full subgroups of $N^\Q$ are exactly the \TG s commensurable to $N$. The following result of \cite{deki99-1} relates the automorphisms of $N$ to that of a full subgroup $M \subset N^\Q$. 
\begin{theorem}
	\label{thm:power}
	Let $N$ be a \TG~with rational Mal'cev completion $N^\Q$ and $M$ a full subgroup of $N^\Q$. For every automorphism $\psi: M \to M$ with extension $\psi^\Q: N^\Q \to N^\Q$, there exists $k > 0$ such that $(\psi^\Q)^k(N) = N$, or thus such that $(\psi^\Q)^k$ induces an automorphism on $N$.
\end{theorem}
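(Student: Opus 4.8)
The plan is to trap $N$ between two $\psi^\Q$-invariant full subgroups of $N^\Q$ and then finish by a pigeonhole argument. Throughout I will use the fact recorded just above the statement, that any two full subgroups of $N^\Q$ are commensurable (they share the rational Mal'cev completion $N^\Q$), so that a nested pair of full subgroups has finite index.

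First I would build the \emph{lower} subgroup. Since $M$ and $N$ are full, $M \cap N$ has finite index in $M$, say of index $d$. Let $A$ be the intersection of all subgroups of $M$ of index at most $d$; as $M$ is finitely generated this is a finite intersection, so $A$ is a finite-index, hence full, subgroup of $N^\Q$, and it is characteristic in $M$. Because $\psi$ is an automorphism of $M$ and $\psi^\Q$ restricts to it, $\psi^\Q(M) = M$ and therefore $\psi^\Q(A) = A$; moreover $A \subseteq M \cap N \subseteq N$ by construction. For the \emph{upper} subgroup, note that $A$ has finite index, say $f$, in $N$, so the permutation action of any $x \in N$ on the $f$ cosets of $A$ in $N$ shows $x^{f!} \in A$. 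Put $e = f!$ and $B = \langle x \in N^\Q \mid x^{e} \in A\rangle$; then $A \subseteq N \subseteq B$, and since $\psi^\Q(A) = A$ a direct computation gives $\psi^\Q(B) = B$.

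The one genuinely technical point is that $B$ is finitely generated. Writing elements of $N^\Q$ in coordinates relative to a Mal'cev basis of $A$, the relation $x^{e} \in A$ forces the (a priori rational) coordinates of $x$ to have denominators bounded only in terms of $A$, $e$, and the nilpotency class of $N$; hence $B$ is contained in a finitely generated subgroup of $N^\Q$ and is therefore itself finitely generated, being a subgroup of a finitely generated nilpotent (hence polycyclic, hence Noetherian) group, so $B$ is a full subgroup of $N^\Q$ as well. Now $A \subseteq N \subseteq B$ with $A$ and $B$ full and $A$ of finite index in $B$, so there are only finitely many subgroups $H$ with $A \subseteq H \subseteq B$. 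The automorphism $\psi^\Q$ fixes both $A$ and $B$, hence permutes this finite family, which contains $N$; therefore $(\psi^\Q)^k(N) = N$ for some $k > 0$, and since $(\psi^\Q)^k$ is injective on $N^\Q$ and maps $N$ onto $N$, it restricts to an automorphism of $N$. I expect essentially all of the work to sit in the bounded-denominator estimate that makes $B$ finitely generated; everything else is the elementary fact that a finitely generated group has only finitely many subgroups of a given finite index, together with the commensurability of full subgroups.
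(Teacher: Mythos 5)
The paper does not prove this statement itself; it is imported from \cite{deki99-1}, so there is no in-text proof to compare against. Your argument is correct and is essentially the standard one used for this kind of lattice-rigidity statement: trap $N$ between a lower $\psi^\Q$-invariant full subgroup (here the characteristic core $A$ of $M\cap N$ in $M$) and an upper invariant full subgroup (the group generated by the $e$-th roots of $A$), and then let $\psi^\Q$ permute the finitely many intermediate subgroups. Two points deserve explicit justification but are both standard. First, you use that $M\cap N$ has finite index in $M$ and in $N$; this is the strong form of commensurability for full subgroups of $N^\Q$ (not merely the abstract commensurability defined in the paper), which holds because full subgroups correspond, up to finite index, to lattices in $\lien^\Q$ under $\log$. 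Second, the finiteness of $[B:A]$ rests on the bounded-denominator lemma you sketch, namely that $\{x\in N^\Q \mid x^e\in A\}$ is contained in a finitely generated (hence full) subgroup; your induction along the lower central series in Mal'cev coordinates is the right proof, and the statement appears in the standard references on $\mathcal{T}$-groups. With those two facts granted, the pigeonhole step is airtight: all iterates $(\psi^\Q)^j(N)$ lie between $A$ and $B$, there are only finitely many such subgroups since each is a union of left cosets of $A$ in $B$, and injectivity of $\psi^\Q$ turns the resulting recurrence into $(\psi^\Q)^k(N)=N$.
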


Using Proposition \ref{prop:eig} above, this leads to the following consequence.

\begin{theorem}
	\label{thm:comm}
	Let $M$ and $N$ be \TG s that are commensurable. Then the group $M$ has a tame automorphism if and only if the group $N$ has a tame automorphism. 
\end{theorem}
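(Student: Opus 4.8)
The plan is to reduce everything to a statement about eigenvalues of automorphisms on the common rational Mal'cev completion $N^\Q \approx M^\Q$, using Theorem \ref{thm:power} to move automorphisms back and forth between $N$ and a full subgroup, and Proposition \ref{prop:eig} to detect tameness via eigenvalues. By symmetry it suffices to prove one direction: if $N$ has a tame automorphism, then so does $M$. Since $M$ and $N$ are commensurable, by the preceding theorem we have $M^\Q \approx N^\Q$; identifying these, both $N$ and $M$ are full subgroups of the same rational Mal'cev completion, which I will call $N^\Q$.

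First I would recall the key fact that allows iteration to interact well with tameness: an automorphism $\varphi$ of a \TG~is tame if and only if, for every $n>0$, the extension $\varphi^F$ has no primitive $n$-th root of unity as an eigenvalue (equivalently, no root of unity at all as an eigenvalue). Indeed, $R(\varphi^n)$ is the Reidemeister number of the automorphism $\varphi^n$ on $N$, whose extension to $N^F$ is $(\varphi^F)^n$; by Proposition \ref{prop:eig}, $R(\varphi^n) = \infty$ exactly when $(\varphi^F)^n$ has $1$ as an eigenvalue, which happens exactly when $\varphi^F$ has some $n$-th root of unity among its eigenvalues. So $\varphi$ is tame $\iff$ no eigenvalue of $\varphi^F$ is a root of unity. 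Crucially, this condition on the eigenvalues depends only on $\varphi^F$ as an automorphism of $N^\Q$ (or $N^\R$), not on which lattice it preserves, and it is preserved under passing to powers.

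Now suppose $\varphi: N \to N$ is a tame automorphism, with extension $\Phi = \varphi^\Q: N^\Q \to N^\Q$. Since $M$ is a full subgroup of $N^\Q$, apply Theorem \ref{thm:power} with the roles reversed: take $\Phi$, which is an automorphism of $N^\Q$, and note that $N^\Q$ is also the rational Mal'cev completion of $M$; a standard argument (the same one behind Theorem \ref{thm:power}) shows there exists $k>0$ such that $\Phi^k(M) = M$, so $\psi := \Phi^k|_M$ is an automorphism of $M$ whose extension to $N^\Q$ is $\Phi^k$. It remains to check $\psi$ is tame. Its extension $\Phi^k$ has eigenvalues the $k$-th powers of the eigenvalues of $\Phi$. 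An eigenvalue $\lambda^k$ of $\Phi^k$ is a root of unity iff $\lambda$ is a root of unity; since $\varphi$ is tame, no eigenvalue of $\Phi$ is a root of unity, hence no eigenvalue of $\Phi^k$ is either, so $\psi$ is tame by the criterion above. Swapping $M$ and $N$ gives the converse, completing the proof.

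The main obstacle, and the one point needing care, is the application of Theorem \ref{thm:power} in the form I need: the theorem as stated gives $(\psi^\Q)^k(N)=N$ for an automorphism $\psi$ of a full subgroup $M$, whereas I want to start from an automorphism of $N$ itself and land on $M$. This is fine because $N$ is equally a full subgroup of $N^\Q$ (it is finitely generated and every element of $N^\Q$ has a power in $N$ by definition of the Mal'cev completion), so the theorem applies symmetrically with $M$ and $N$ interchanged — but I would state this explicitly rather than leaving it implicit. The only other routine point is that raising to a positive power commutes with the eigenvalue computation and sends non-roots-of-unity to non-roots-of-unity, which is elementary. Everything else is bookkeeping around the identifications $M^\Q \approx N^\Q$ and the correspondence between automorphisms of $N^F$ as a group and of $\lien^F$ as a Lie algebra.
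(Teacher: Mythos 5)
Your proposal is correct and follows essentially the same route as the paper: identify $M^\Q \approx N^\Q$, use Theorem \ref{thm:power} to find a power of the extended automorphism that preserves the other full subgroup, and conclude tameness by comparing eigenvalues via Proposition \ref{prop:eig}. The only differences are cosmetic — you prove the reduction in the opposite direction (which is immaterial by symmetry) and make explicit the ``no roots of unity as eigenvalues'' reformulation that the paper leaves implicit.
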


\begin{proof}
	By symmetry, it suffices to show that if $M$ has a tame automorphism $\psi$, then also $N$ has a tame automorphism. As $M$ and $N$ are commensurable, they have an isomorphic rational Mal'cev completion. Hence we can assume without loss of generality that $M$ is a full subgroup of $N^\Q$. 
	
	If $\psi^\Q$ is the extension of $\psi$ to $N^\Q$, then $\psi^n$ has extension $\left(\psi^\Q\right)^n$ on $N^\Q$. By Theorem \ref{thm:power}, $\psi^\Q$ has a power $k$ such that $(\psi^\Q)^k(N) = N$, and thus induces an automorphism $\varphi$ on $N$. As the eigenvalues of $\varphi^n$ and $\psi^{kn}$ correspond as they have the same extension to $N^\Q$, we immediately get that $\varphi$ is tame.
\end{proof}

This result is in contrast to the $R_\infty$-property, which is not preserved for commensurable groups, even not in the special case of \TG s. Indeed, recent work \cite{lw23-1} gives examples of two \TG s $M$ and $N$ of nilpotency class $2$ that are commensurable, but one has $R_\infty$ and the other does not. Note that Theorem \ref{thm:comm} does imply that both groups from \cite{lw23-1} do not have a tame automorphism.

In fact, we can fully characterize the existence of a tame automorphism on the rational Mal'cev completion. An automorphism is called \textbf{integer-like} if it has characteristic polynomial in $\Z[X]$ and determinant $\pm1$, where we recall that these properties are defined via the Lie algebra. 

\begin{theorem}
	\label{thm:char}
A \TG~$N$ admits a tame automorphism if and only if $N^\Q$ admits an integer-like automorphism with no eigenvalues that are roots of unity.
\end{theorem}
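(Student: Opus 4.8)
The plan is to translate the statement, via the eigenvalue criterion of Proposition~\ref{prop:eig}, into a question about which automorphisms of the Lie algebra $\lien^\Q$ arise from automorphisms of a lattice, and then to move between commensurable \TG s using Theorem~\ref{thm:comm}. For the implication ``only if'', let $\varphi \in \Aut(N)$ be tame and let $\varphi^\Q$ be its extension to $N^\Q$, viewed as an automorphism of $\lien^\Q$. For each $n > 0$ the iterate $\varphi^n$ is an automorphism of $N$ with finite Reidemeister number and extension $(\varphi^\Q)^n$, so Proposition~\ref{prop:eig} forces $(\varphi^\Q)^n$ to have no eigenvalue $1$; hence $\varphi^\Q$ has no root of unity among its eigenvalues. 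To see that $\varphi^\Q$ is integer-like, I would use that $\varphi$ preserves each term $\gamma_i(N)$ of the lower central series and therefore induces an automorphism of the free abelian group $A_i = (\gamma_i(N)/\gamma_{i+1}(N))/\mathrm{tors}$, i.e.\ an element $g_i \in \GL_{d_i}(\Z)$. The extension $\varphi^\Q$ respects the corresponding filtration of $\lien^\Q$ and acts on the $i$-th graded piece $A_i \otimes \Q$ as $g_i$, so its characteristic polynomial equals the product $\prod_i \chi_{g_i}$ of the characteristic polynomials $\chi_{g_i}$ of the $g_i$. Since each $\chi_{g_i} \in \Z[X]$ is monic with constant term $\pm 1$, the product lies in $\Z[X]$ and has constant term $\pm 1$, so $\mathrm{charpoly}(\varphi^\Q) \in \Z[X]$ and $\det(\varphi^\Q) = \pm 1$. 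Thus $\varphi^\Q$ is an integer-like automorphism of $N^\Q$ with no roots of unity as eigenvalues, as required.

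For the implication ``if'', suppose $\psi$ is an integer-like automorphism of $N^\Q$ without root of unity eigenvalues; write $\psi$ also for the corresponding automorphism of $\lien^\Q$ and set $m = \dim_\Q \lien^\Q$. The heart of the argument is to produce a $\psi$-invariant full subgroup $M \subset N^\Q$. Since $\mathrm{charpoly}(\psi) \in \Z[X]$ is monic with constant term $c_0 = \pm 1$, Cayley--Hamilton gives a relation $\psi^m + c_{m-1}\psi^{m-1} + \dots + c_1 \psi + c_0 = 0$ with all $c_i \in \Z$, whence $\psi^{-1} = \mp(\psi^{m-1} + \dots + c_1) \in \Z[\psi]$; consequently $\Z[\psi] = \Z[\psi, \psi^{-1}]$ is a finitely generated $\Z$-module stable under both $\psi$ and $\psi^{-1}$. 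Picking a $\Q$-basis $v_1, \dots, v_m$ of $\lien^\Q$, the $\Z$-module $\Lambda_0 = \Z[\psi] \cdot \{v_1, \dots, v_m\}$ is then a full lattice with $\psi(\Lambda_0) = \Lambda_0$. Next I would pass to the Lie subring $\mathfrak m$ of $\lien^\Q$ generated by $\Lambda_0$; by nilpotency this is a finite sum of iterated-bracket lattices of $\Lambda_0$, hence again a finitely generated $\Z$-module and a full lattice, and it stays $\psi$-invariant because $\psi$ is a Lie algebra automorphism. Finally, for a sufficiently divisible integer $q > 0$ the set $M = \exp(q\mathfrak m)$ is a subgroup of $N^\Q$ (the Baker--Campbell--Hausdorff denominators are bounded since $\lien^\Q$ is nilpotent); it is a \TG, it is full because $q\mathfrak m$ is a full $\Q$-lattice (so each $x \in N^\Q$ has $x^k = \exp(k\log x) \in M$ for some $k > 0$), and $\psi(q\mathfrak m) = q\mathfrak m$ shows that $\psi$ restricts to an automorphism $\psi|_M \in \Aut(M)$.

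It then remains to harvest the conclusion. For every $n > 0$ the extension of $(\psi|_M)^n$ to $M^\Q = N^\Q$ is $\psi^n$, which has no eigenvalue $1$ (otherwise $\psi$ would have a root of unity eigenvalue), so $R((\psi|_M)^n) < \infty$ by Proposition~\ref{prop:eig}; hence $\psi|_M$ is a tame automorphism of $M$. Since $M$ and $N$ are both full subgroups of $N^\Q$, they have the same rational Mal'cev completion and are therefore commensurable, so Theorem~\ref{thm:comm} transfers the tame automorphism from $M$ to $N$, completing the proof.

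I expect the main obstacle to be the construction in the ``if'' direction of the $\psi$-invariant full subgroup $M$: one has to arrange simultaneously the $\psi$-invariance of the lattice (which is exactly where ``integer-like'' enters, through $\psi^{-1} \in \Z[\psi]$ and the finite generation of $\Z[\psi]$), closure of the lattice under the Lie bracket, integrality of the BCH multiplication after rescaling, and the fullness of the resulting subgroup. By comparison, the eigenvalue bookkeeping and the reductions via Proposition~\ref{prop:eig} and Theorem~\ref{thm:comm} are routine.
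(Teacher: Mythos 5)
Your argument is correct and follows the same route as the paper: both directions reduce to the correspondence between integer-like automorphisms of $N^\Q$ and automorphisms of full subgroups of $N^\Q$, combined with the eigenvalue criterion of Proposition \ref{prop:eig} and the transfer across commensurable groups in Theorem \ref{thm:comm}. The only difference is that the paper outsources the two key facts to \cite[Theorem 6.1.]{dere15-1} --- namely that the extension $\varphi^\Q$ of an automorphism of $N$ is integer-like, and that an integer-like $\psi$ preserves some full subgroup $M \subset N^\Q$ --- whereas you prove them directly (via the lower central series quotients for the first, and via the $\Z[\psi]$-invariant lattice, bracket closure, and BCH rescaling for the second), which is essentially the standard proof of the cited result.
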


\begin{proof}
Note that not every automorphism of $N^\Q$ induces an automorphism on $N$. The exact correspondence has been developed in \cite[Theorem 6.1.]{dere15-1}, where it is shown that $\psi \in \Aut(N^\Q)$ induces an automorphism on some full subgroup $M \subset N^\Q$ if and only if $\psi$ is integer-like. Note that if $\psi$ has no eigenvalues that are roots of unity, then $\psi^n$ for $n > 0$ has no eigenvalues equal to $1$. In particular, $\psi$ induces a tame automorphism on $M$. As $N$ and $M$ are commensurable, the previous theorem implies that $N$ admits a tame automorphism.

For the other direction, assume that $\varphi: N \to N$ is an automorphism. Take $M$ equal to the lattice hull of $N$, i.e.~the smallest subgroup of $N^\Q$ such that $\log(M)$ is closed under addition. By \cite{bele03-1} it is known that $N$ is finite index in $M$ and moreover $\varphi^\Q(M) = M$. The corresponding Lie algebra automorphism maps $\log(M) \approx \Z^k$ onto itself, or thus can be represented by an element in $\GL(k,\Z)$. We conclude that the map $\varphi$ must be integer-like. On the other hand, if $\varphi$ has an eigenvalue $\lambda$ such that $\lambda^n =1$ for some $n \geq 1$, then Proposition \ref{prop:eig} implies that $\varphi^n$ satisfies $R(\varphi^n) = \infty$. Hence, if $\varphi$ is tame, then $\varphi$ has no eigenvalues that are roots of unity, completing the other direction.
\end{proof}

\noindent We call a rational nilpotent Lie algebra tame if it admits an integer-like automorphism without roots of unity as eigenvalues, similarly as the definition of Anosov Lie algebras.

This result is equivalent to the one for Anosov diffeomorphisms \cite{deki99-1} and for expanding maps \cite{dd14-1}. However, in the latter case, it is moreover true that it only depends on the real Mal'cev completion. In the next subsection we investigate whether the same holds for tame automorphisms.

\subsection{Lattices in Lie groups}

As mentioned above, every \TG~$N$ has a corresponding real Mal'cev completion $N^\R$, which contains $N$ as a discrete and cocompact subgroup. The following result is a rephrasing of \cite[Theorem 3.2.]{lw08-1}.
\begin{Prop}
	\label{lauret}
Let $N^\R$ be a real Mal'cev completion of a \TG~$N$ such that the corresponding Lie algebra has a positive grading. The direct sum $G = \displaystyle \bigoplus_{i=1}^s N^\R$ with $s \geq 2$ contains a cocompact lattice $N^\prime \subset G$ that has an Anosov automorphism.
\end{Prop}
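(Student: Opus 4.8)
The plan is to recognize $G$ as the real Mal'cev completion of a rational nilpotent Lie algebra built out of $\lien^\Q$ and a totally real number field of degree $s$, and then to manufacture the Anosov automorphism from a hyperbolic unit of that field; this is the number‑field construction underlying \cite{lw08-1}.

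First I would fix the positive grading on the rational side, writing $\lien^\Q = \bigoplus_{j \geq 1} V_j$ with $[V_i,V_j] \subseteq V_{i+j}$, and after rescaling a homogeneous $\Q$-basis $\{e_a\}$ of $\lien^\Q$ I may assume its structure constants are integers. Choose a totally real number field $K$ with $[K:\Q] = s$ (such fields exist for every $s$), and form the $\Q$-Lie algebra $\liem = \lien^\Q \otimes_\Q K$, regarded over $\Q$. Since $K \otimes_\Q \R \cong \R^{s}$ as $\R$-algebras, one has $\liem \otimes_\Q \R \cong \lien^\R \otimes_\R \R^{s} \cong \bigoplus_{i=1}^{s} \lien^\R$ as real Lie algebras, so the $1$-connected nilpotent Lie group with Lie algebra $\liem \otimes_\Q \R$ is exactly $G$. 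Fixing an integral basis $\{\omega_b\}$ of $\mathcal{O}_K$, the $\Z$-span $\Lambda$ of $\{e_a \otimes \omega_b\}$ is a lattice in $\liem$ closed under the bracket (integer structure constants, and $\mathcal{O}_K$ is a ring), so by Mal'cev's theory $\Lambda$ determines a cocompact lattice $N^\prime \subset G$.

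For a unit $u \in \mathcal{O}_K^\times$, define $\Phi\colon \liem \to \liem$ to act on $V_j \otimes_\Q K$ as $\Id_{V_j} \otimes m_{u^j}$, where $m_{u^j}$ is multiplication by $u^j$ on $K$. The grading condition makes $\Phi$ a Lie algebra automorphism, and since $m_u$ has integer matrix with determinant $N_{K/\Q}(u) = \pm 1$ on $\mathcal{O}_K$, so does $m_{u^{-1}}$; hence $\Phi$ and $\Phi^{-1}$ have integer matrices with respect to $\{e_a \otimes \omega_b\}$, so $\Phi(\Lambda) = \Lambda$ and $\Phi$ restricts to an automorphism $\varphi^\prime$ of $N^\prime$. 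Passing to $\C$ through the $s$ (all real) embeddings $\sigma_1, \dots, \sigma_s\colon K \to \C$ diagonalizes the $K$-factor, and one reads off that the eigenvalues of $\Phi$ are precisely the numbers $\sigma_i(u)^j$ for $1 \le i \le s$ and $j$ a grading weight, each with multiplicity $\dim_\Q V_j$. As the $\sigma_i(u)$ are real and $j \ge 1$, we get $|\sigma_i(u)^j| = |\sigma_i(u)|^j$, so $\varphi^\prime$ is Anosov as soon as $|\sigma_i(u)| \neq 1$ for every $i$.

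The only substantial point — and the only place where $s \geq 2$ enters — is producing such a unit. Since $K$ is totally real of degree $s$, Dirichlet's unit theorem gives that $\mathcal{O}_K^\times$ has rank $s-1 \geq 1$, and the logarithmic embedding $u \mapsto (\log|\sigma_1(u)|,\dots,\log|\sigma_s(u)|)$ sends $\mathcal{O}_K^\times$ onto a full-rank lattice in the hyperplane $\{\sum_i x_i = 0\}$ of dimension $s-1$. Each coordinate hyperplane $\{x_i = 0\}$ meets this hyperplane in a proper (hence $\leq (s-2)$-dimensional) subspace, and a full-rank lattice cannot lie in a finite union of such proper subspaces; therefore some unit $u$ satisfies $\log|\sigma_i(u)| \neq 0$ for all $i$, i.e. $|\sigma_i(u)| \neq 1$ for all $i$. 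With this $u$, the automorphism $\varphi^\prime$ of $N^\prime$ constructed above has no eigenvalue of absolute value $1$, so it is Anosov, which finishes the proof. Everything except this hyperbolic-unit step is a routine verification of the tensor construction and of integrality.
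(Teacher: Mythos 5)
The paper does not prove this proposition at all: it is stated as a rephrasing of \cite[Theorem 3.2.]{lw08-1} and used as a black box. What you have written is essentially a from-scratch reconstruction of the number-field construction behind that citation, and it is correct in all of its substantive steps: the identification $\lien^\Q\otimes_\Q K\otimes_\Q\R\cong\bigoplus_{i=1}^s\lien^\R$ for $K$ totally real of degree $s$, the bracket-closed order $\Lambda$ spanned by $e_a\otimes\omega_b$, the automorphism acting by $u^j$ on the degree-$j$ piece (which is where the positivity of the grading is used, to make $\Phi$ multiplicative on brackets and to ensure no weight-$0$ eigenvalues), the integrality and unimodularity of $m_u$ and $m_{u^{-1}}$ on $\mathcal{O}_K$, and the Dirichlet-unit argument producing $u$ with $|\sigma_i(u)|\neq 1$ for all $i$ --- this last point being exactly where $s\geq 2$ is needed, as you say. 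Compared with the paper, your route buys an actual self-contained proof at the cost of redoing \cite{lw08-1}.

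The one step you should not pass over silently is the very first one: the hypothesis is that the \emph{real} Lie algebra $\lien^\R$ admits a positive grading, whereas your construction needs a positive grading of the \emph{rational} form $\lien^\Q$ (otherwise the subspaces $V_j$ are not defined over $\Q$, and neither $\liem=\bigoplus_j V_j\otimes_\Q K$ nor the lattice $\Lambda$ makes sense). It is true that the existence of a positive grading is insensitive to the ground field of characteristic $0$, so a positive grading of $\lien^\R$ forces one on $\lien^\Q$, but this is a genuine theorem (proved via maximal tori of the algebraic group of automorphisms; see J.~Der\'e, \emph{Gradings on Lie algebras with applications to infra-nilmanifolds}) and not a formality. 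Either invoke that result explicitly or restate the hypothesis on the rational side; with that reference added, your proof is complete.
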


In particular, the group $N^\prime$ is a \TG~that has a tame automorphism by Corollary \ref{cor:anotame}. We show that sometimes the same Lie group contains different lattices with possibly no tame automorphisms. For this, we first need to know more about automorphisms of direct sums of Lie algebras.

Recall that a Lie algebra $\lie$ over a field $F$ is \textbf{indecomposable} if it has no decomposition $\lie = \lie_1 \oplus \lie_2$ into two non-trivial ideals $\lie_i$ of $\lie$. Every finite dimensional Lie algebra $\lie$ can be written as a direct sum of indecomposable ideals $\lie =  \displaystyle \bigoplus_{i=1}^s \lie_i$. If one of the Lie algebras $\lie_i$ is abelian, we say that $\lie$ has an abelian factor, and the same notion will be used for rational and real Mal'cev completions via their Lie algebra. Given automorphisms $\psi_i: \lie_i \to \lie_i$, we naturally have an automorphism on $\lie$ given by $$(\psi_1, \ldots, \psi_s): \bigoplus_{i=1}^s \lie_i \to \bigoplus_{i=1}^s \lie_i$$ which maps each $x_i \in \lie_i$ to $\psi_i(x_i) \in \lie_i$. The following proposition states that any automorphism resembles one of these automorphisms in a specific way.

\begin{Prop}
	\label{prop:preserve}
Let $\lie =\displaystyle \bigoplus_{i=1}^s \lie_i$ be a Lie algebra over a field $F$, written as a direct sum of ideals $\lie_i$, where every $\lie_i$ is indecomposable and non-abelian. Then any automorphism $\psi: \lie \to \lie$ has some power $\psi^k$ with $k > 0$ such that $\psi^k$ has the same characteristic polynomial as $(\psi_1,\ldots,\psi_s)$ for some $\psi_i \in \Aut(\lie_i)$. 
\end{Prop}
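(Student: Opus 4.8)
The plan is to use the Krull--Remak--Schmidt theorem to reduce $\psi$ to something that merely permutes the ideals $\lie_i$, and then to show that the "off-diagonal'' part of $\psi$ that one cannot remove lands in the centre of $\lie$ and is therefore invisible to the characteristic polynomial. So first I would invoke the Krull--Remak--Schmidt theorem for finite-dimensional Lie algebras: since every $\lie_i$ is indecomposable and non-abelian, $\lie$ has no abelian direct factor, the decomposition into indecomposable ideals is unique up to order and isomorphism, and in its refined (exchange) form it yields a permutation $\sigma \in S_s$ such that the composite $\theta_i \colon \lie_i \hookrightarrow \lie \xrightarrow{\psi} \lie \xrightarrow{p_{\sigma(i)}} \lie_{\sigma(i)}$ is an isomorphism for every $i$, where $p_j \colon \lie \to \lie_j$ is the projection belonging to the given decomposition. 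In particular $\lie_i \cong \lie_{\sigma(i)}$, and the maps $\theta_i$ assemble into a genuine automorphism $\Psi_0 \in \Aut(\lie)$: it is bijective, on each $\lie_i$ it is the Lie homomorphism $\theta_i$, and brackets between distinct summands vanish on both sides. Taking $k$ to be the order of $\sigma$, the power $\Psi_0^{k}$ maps each $\lie_i$ into itself and restricts there to the cyclic composite of the relevant $\theta$'s around the $\sigma$-orbit, an automorphism $\xi_i \in \Aut(\lie_i)$; thus $\Psi_0^{k} = (\xi_1,\dots,\xi_s)$ already has the desired block shape, and I am reduced to proving $\chi_{\psi^{k}} = \chi_{\Psi_0^{k}}$.

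For this, set $\eta := \Psi_0^{-1}\psi \in \Aut(\lie)$. Chasing the projections shows $p_i \circ \eta|_{\lie_i} = \Id_{\lie_i}$ for every $i$ (the $\lie_i$-component of $\eta(x)$ for $x\in\lie_i$ comes exactly from the $\lie_{\sigma(i)}$-component $\theta_i(x)$ of $\psi(x)$, and $\theta_i^{-1}\theta_i(x)=x$), so $\eta = \Id + N$ with $N(\lie_i) \subseteq \bigoplus_{j\ne i}\lie_j$. The crucial step is a short bracket computation. Applying $\eta$ to $[x,y]=0$ for $x\in\lie_i$, $y\in\lie_j$ with $i\ne j$, and separating the components in the various summands, forces every off-diagonal block $p_j \circ N|_{\lie_i}$ ($j\ne i$) to take values in $Z(\lie_j)$; hence $N(\lie)\subseteq Z(\lie)=\bigoplus_i Z(\lie_i)$. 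Applying $\eta$ to $[x,y]$ for $x,y$ in the same $\lie_i$, and using that $Nx$, $Ny$ are now central, gives $[\eta x,\eta y]=[x,y]$, so $\eta([x,y])=[x,y]$ and $N$ vanishes on $[\lie,\lie]$. Since each $\lie_i$ is indecomposable and non-abelian it has no abelian direct factor, so $Z(\lie_i)\subseteq[\lie_i,\lie_i]$, whence $Z(\lie)\subseteq[\lie,\lie]\subseteq\ker N$. Together with $N(\lie)\subseteq Z(\lie)$ this gives $N^{2}=0$; in particular $\eta$ is unipotent, $N(\lie)\subseteq Z(\lie)$ and $Z(\lie)\subseteq\ker N$.

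To conclude, I would expand $\psi^{k} = (\Psi_0\eta)^{k} = \Psi_0^{k}\,\eta_{k-1}\cdots\eta_1\eta_0$ with $\eta_m := \Psi_0^{-m}\eta\Psi_0^{m} = \Id + N_m$, $N_m := \Psi_0^{-m}N\Psi_0^{m}$. Because $\Psi_0$ preserves $Z(\lie)$, each $N_m$ again satisfies $N_m(\lie)\subseteq Z(\lie)\subseteq\ker N_m$, so every product $N_m N_{m'}$ vanishes and $\eta_{k-1}\cdots\eta_0 = \Id + N'$ with $N'(\lie)\subseteq Z(\lie)\subseteq\ker N'$. Hence $\psi^{k} = \Psi_0^{k}(\Id + N')$ stabilises $Z(\lie)$, restricts on $Z(\lie)$ to $\Psi_0^{k}|_{Z(\lie)}$ (since $N'|_{Z(\lie)}=0$), and induces $\overline{\Psi_0^{k}}$ on $\lie/Z(\lie)$ (since $N'(\lie)\subseteq Z(\lie)$). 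Splitting the characteristic polynomial along $0\subseteq Z(\lie)\subseteq\lie$ for both $\psi^{k}$ and $\Psi_0^{k}$ gives $\chi_{\psi^{k}} = \chi_{\Psi_0^{k}} = \prod_{i=1}^{s}\chi_{\xi_i}$, which is precisely the characteristic polynomial of $(\xi_1,\dots,\xi_s)$.

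The step I expect to need the most care is the first one: not merely the uniqueness of the decomposition up to isomorphism, but the exchange form of Krull--Remak--Schmidt, i.e.\ that $\psi$ can be matched block-by-block with an honest block-permutation automorphism $\Psi_0$ so that $\eta=\Psi_0^{-1}\psi$ has identity diagonal blocks. Everything afterwards is soft, but the non-abelian hypothesis is used essentially — both to build $\Psi_0$ and, via $Z(\lie_i)\subseteq[\lie_i,\lie_i]$, to get $N^{2}=0$; and indeed the statement fails without it (for instance if $\lie$ is abelian, $\chi_{\psi^{k}}$ need not even split into linear factors over $F$, so it cannot be a product of characteristic polynomials of $1$-dimensional automorphisms).
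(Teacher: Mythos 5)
Your argument is correct, but note that the paper does not actually prove Proposition \ref{prop:preserve}: its ``proof'' is the single sentence that this is the special case of \cite[Theorem 4.5]{dw22-1} with the abelian factor $\mathfrak{a}$ trivial. So you have supplied a self-contained argument where the paper supplies a citation, and what you have written is in effect a direct verification of the cited statement in this special case. Everything after your first step checks out: the reduction to $\eta=\Psi_0^{-1}\psi=\Id+N$ with vanishing diagonal blocks, the component-wise bracket computation giving $N(\lie)\subseteq Z(\lie)$, the fact that $\eta$ then fixes $[\lie,\lie]$ pointwise, the inclusion $Z(\lie_i)\subseteq[\lie_i,\lie_i]$ for indecomposable non-abelian $\lie_i$ (a central element outside the derived algebra splits off a one-dimensional abelian ideal, since any vector-space complement containing $[\lie_i,\lie_i]$ is automatically an ideal), the telescoping of $(\Psi_0\eta)^k$ into $\Psi_0^k(\Id+N')$ with $N'(\lie)\subseteq Z(\lie)\subseteq\ker N'$, and the splitting of the characteristic polynomial along $0\subseteq Z(\lie)\subseteq\lie$. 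The one step carrying real content is the one you flag yourself: the existence of a single permutation $\sigma$ making every diagonal composite $p_{\sigma(i)}\circ\psi|_{\lie_i}$ invertible. This does hold, and the cleanest justification is to observe that a direct-sum decomposition of $\lie$ into ideals is the same as a decomposition of the adjoint $\lie$-module into submodules (ideals of $\lie$ contained in $\lie_i$ coincide with ideals of $\lie_i$ because distinct summands commute), that the summands are finite-dimensional indecomposable modules and hence have local endomorphism rings by Fitting's lemma, and that Azumaya's theorem in its exchange form, applied to the two module decompositions $\bigoplus_i\lie_i=\bigoplus_i\psi(\lie_i)$, yields exactly the required permutation; since the projections and $\psi$ are Lie algebra homomorphisms, the resulting bijections $\theta_i$ are automatically Lie algebra isomorphisms. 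With that lemma properly sourced or proved, your argument stands on its own and is arguably more informative than the paper's citation, since it isolates precisely where the non-abelian hypothesis enters (both in the exchange step and via $Z(\lie_i)\subseteq[\lie_i,\lie_i]$), which is exactly the part that \cite{dw22-1} must handle separately through the abelian factor $\mathfrak{a}$.
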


\begin{proof}
This is a special case of \cite[Theorem 4.5]{dw22-1} with the abelian factor $\mathfrak{a}$ trivial. 
\end{proof}

We also need the following characterization of integer-like automorphisms. Recall that an element $\alpha \in \C$ is an \textbf{algebraic integer} if it is algebraic over $\Q$ and its minimal polynomial over $\Q$ has coefficients over $\Z$. A rational number $\alpha \in \Q$ is an algebraic integer if and only if $\alpha \in \Z$. If the characteristic polynomial of an algebraic integer $\alpha \in \C$ moreover has constant term $\pm 1$, we call $\alpha$ an \textbf{algebraic unit}. In particular, the only algebraic units in $\Q$ are $\pm 1$. The notion of algebraic units resembles the definition of integer-like matrices, which is not a coincidence.

\begin{Lem}
	\label{lem:int}
An automorphism $\psi \in \Aut(\lien^\Q)$ of a rational Lie algebra is integer-like if and only if its eigenvalues are algebraic units. 
\end{Lem}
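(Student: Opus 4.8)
The plan is to prove both implications by comparing the characteristic polynomial of $\psi$, which a priori lies only in $\Q[X]$ since $\psi$ is a $\Q$-linear automorphism of the finite-dimensional space $\lien^\Q$, with the minimal polynomials of its eigenvalues, using Gauss's lemma as the bridge. Throughout I write $p_\psi(X) = \det(X\Id - \psi) \in \Q[X]$, a monic polynomial of degree $n = \dim_\Q \lien^\Q$ with $p_\psi(0) = (-1)^n \det(\psi)$; since $\psi$ is invertible, $0$ is not an eigenvalue, so $p_\psi(0) \neq 0$. Note also that $\det(\psi) = \pm 1$ is equivalent to $p_\psi(0) = \pm 1$.

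For the ``only if'' direction, assume $\psi$ is integer-like, so $p_\psi \in \Z[X]$ and $p_\psi(0) = \pm 1$. Let $\alpha$ be an eigenvalue. It is a root of the monic integer polynomial $p_\psi$, hence an algebraic integer, and its minimal polynomial $m_\alpha \in \Q[X]$ is a monic divisor of $p_\psi$ in $\Q[X]$. By Gauss's lemma, both $m_\alpha$ and the cofactor $p_\psi/m_\alpha$ lie in $\Z[X]$ and are monic; taking constant terms, $m_\alpha(0)$ is an integer dividing $p_\psi(0) = \pm 1$, so $m_\alpha(0) = \pm 1$, and $\alpha$ is an algebraic unit.

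For the ``if'' direction, assume every eigenvalue of $\psi$ is an algebraic unit. Factor $p_\psi$ into monic irreducibles over $\Q$, say $p_\psi = \prod_{j=1}^r q_j^{e_j}$ with the $q_j$ distinct. Each $q_j$ is the minimal polynomial of one of its roots, which is an eigenvalue of $\psi$ and hence an algebraic unit; therefore $q_j \in \Z[X]$ with constant term $\pm 1$. Consequently $p_\psi = \prod q_j^{e_j} \in \Z[X]$ and $p_\psi(0) = \prod q_j(0)^{e_j} = \pm 1$, so $\det(\psi) = (-1)^n p_\psi(0) = \pm 1$, i.e.~$\psi$ is integer-like.

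The argument is short and essentially routine; the only point requiring care — and the main (mild) obstacle — is the passage from ``$\alpha$ is a root of the monic polynomial $p_\psi$'' to ``$m_\alpha$ has integer coefficients and divides $p_\psi$ in $\Z[X]$'', which is precisely where Gauss's lemma is invoked, together with the already recorded fact that a rational algebraic integer is an ordinary integer. No structure of $\psi$ as a Lie algebra automorphism beyond invertibility is used, which is why the statement is really a fact about invertible matrices over $\Q$.
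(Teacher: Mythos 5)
Your proof is correct and follows essentially the same route as the paper: Gauss's lemma for the ``integer-like $\Rightarrow$ algebraic units'' direction, and elementary facts about algebraic integers for the converse. The only (immaterial) difference is in the converse, where you factor $p_\psi$ into monic irreducibles over $\Q$ and apply Gauss's lemma to each, while the paper observes directly that the coefficients of $p_\psi$ are rational algebraic integers (sums and products of the eigenvalues) and hence lie in $\Z$, with the constant term a rational algebraic unit and hence $\pm 1$.
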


\begin{proof}
Let $\psi$ be an integer-like automorphism. If $\alpha \in \C$ is a root of the characteristic polynomial, it is of course algebraic over $\Q$. Moreover, by Gauss's lemma its minimal polynomial over $\Q$ lies in $\Z[X]$ and has constant term $\pm 1$.

Conversely, if every root of the characteristic polynomial is an algebraic unit, then so is the constant term of the characteristic polynomial as a product of algebraic units. Moreover, as it lies in $\Q$, it must be $\pm1$. Similarly, the other coefficients of the characteristic polynomial are algebraic integers as products and sums of algebraic integers, and thus lie in $\Z$.
\end{proof}

This immediately implies the following for automorphisms on the rational Mal'cev completions of $\Z$ and $H_3(\Z)$.
\begin{Lem}
	\label{lem:heis}
Every integer-like automorphism of either the additive group $\Q$ or the rational Heisenberg group $H_3(\Q)$ has an eigenvalue $\pm1$.
\end{Lem}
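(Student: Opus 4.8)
The plan is to treat the two cases separately, since the dimensions (1 and 3) are small enough to work with the characteristic polynomial directly. For the additive group $\Q$, an automorphism is just multiplication by some nonzero $\lambda \in \Q$, and by Lemma \ref{lem:int} being integer-like forces $\lambda$ to be an algebraic unit in $\Q$, hence $\lambda = \pm 1$. So the single eigenvalue is already $\pm 1$, and there is nothing more to do in this case.

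For the rational Heisenberg group $H_3(\Q)$, I would pass to the Lie algebra $\lieh_3^\Q$, which has basis $X, Y, Z$ with the only nontrivial bracket $[X,Y] = Z$, so $Z$ spans the (1-dimensional) center and derived algebra. Any automorphism $\psi$ preserves the center, hence $\psi(Z) = d\, Z$ for some $d \in \Q^\ast$, and $\psi$ induces an automorphism $\overline{\psi}$ on the 2-dimensional abelianization $\lieh_3^\Q / \langle Z \rangle$. The key structural fact is the compatibility $\psi([X,Y]) = [\psi(X), \psi(Y)]$, which shows $d = \det(\overline{\psi})$: the action on $Z$ is the determinant of the action on the abelianization. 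Consequently the characteristic polynomial of $\psi$ factors as $(T - d) \cdot p(T)$, where $p(T) = T^2 - \trace(\overline{\psi}) T + d$ is the characteristic polynomial of $\overline{\psi}$, whose product of roots is $d$.

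Now suppose $\psi$ is integer-like, so by Lemma \ref{lem:int} all three eigenvalues are algebraic units; in particular $d \in \Q$ is an algebraic unit, hence $d = \pm 1$. If $d = 1$, then $1$ is an eigenvalue and we are done. If $d = -1$, then the two eigenvalues $\mu_1, \mu_2$ of $\overline{\psi}$ satisfy $\mu_1 \mu_2 = -1$ and $\mu_1 + \mu_2 = \trace(\overline{\psi}) \in \Q$; since they are algebraic units with rational sum, either both are rational — forcing $\{\mu_1,\mu_2\} = \{1, -1\}$ and again $1$ is an eigenvalue — or they are a conjugate pair of quadratic units, but then $p(T) = T^2 - (\mu_1+\mu_2) T - 1$ has integer coefficients with constant term $-1$, and its rational roots (if any) divide $-1$. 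I would check that in the irrational case the full characteristic polynomial $(T+1)p(T)$ still has $-1$ as its only root of the relevant shape, which is not an eigenvalue equal to $1$ — so I must rule this out. The resolution is that the lemma as used only needs *some* eigenvalue in $\{\pm 1\}$: when $d = -1$ the eigenvalue $-1$ coming from the center already suffices, so the statement "has an eigenvalue $\pm 1$" holds automatically. The main (minor) obstacle is simply bookkeeping the two sub-cases $d = 1$ versus $d = -1$ cleanly and noticing that the center contributes the eigenvalue $d \in \{\pm 1\}$ in every integer-like case, which settles the lemma without needing to analyze $\overline{\psi}$ further.
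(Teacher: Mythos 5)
Your proof is correct and, once the detour through the sub-cases $d=1$ versus $d=-1$ resolves itself, it lands on exactly the paper's argument: the one-dimensional center gives a rational eigenvalue, which is an algebraic unit by Lemma \ref{lem:int} and hence $\pm1$. The analysis of $\overline{\psi}$ and the identity $d=\det(\overline{\psi})$ are correct but unnecessary, as you yourself conclude.
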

\begin{proof}	
	For $\Q$ this is clear, as the only integer-like automorphisms are multiplication with $\pm1$. For $H_3(\Q)$, we prove this at the level of the Lie algebras, where we write $\mathfrak{h}^\Q$ for the corresponding rational Lie algebra. As the center of $\mathfrak{h}^\Q$ has dimension $1$, it corresponds to a rational eigenvalue. This eigenvalue is also an algebraic unit by Lemma \ref{lem:int} and thus must be $\pm1$.
\end{proof}

We are now ready to prove the main result of this subsection.
\begin{theorem}
	\label{thm:noQI}
	There exist a $1$-connected nilpotent Lie group $G$ with two cocompact lattices $N_1$ and $N_2$, such that $N_1$ admits a tame automorphism and $N_2$ does not.
\end{theorem}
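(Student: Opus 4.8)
The plan is to realize both lattices inside $G = H_3(\R) \times H_3(\R)$, the product of two copies of the $3$-dimensional real Heisenberg group, which is $1$-connected and nilpotent of class two. The guiding idea is that the real Mal'cev completion does not remember the commensurability class, so inside this fixed $G$ there is room both for a lattice whose rational completion is ``indecomposable enough'' to carry an Anosov automorphism and for one whose rational completion is ``too decomposable'' to carry any tame automorphism. Since the Heisenberg Lie algebra $\mathfrak{h}_3$ admits a positive grading (place the two non-central generators in degree $1$ and their bracket in degree $2$), Proposition \ref{lauret} applied with $s = 2$ produces a cocompact lattice $N_1 \subset G$ carrying an Anosov automorphism, and $N_1$ is therefore tame by Corollary \ref{cor:anotame}.

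For the second lattice I would take $N_2 = H_3(\Z) \times H_3(\Z)$, a cocompact lattice in $G$ with $N_2^\R = H_3(\R) \times H_3(\R) = G$ and $N_2^\Q = H_3(\Q) \times H_3(\Q)$, so that the Lie algebra of $N_2^\Q$ is $\mathfrak{h}_3^\Q \oplus \mathfrak{h}_3^\Q$, a direct sum of two indecomposable non-abelian ideals. Suppose towards a contradiction that $N_2$ admits a tame automorphism. By Theorem \ref{thm:char} there is an integer-like automorphism $\psi$ of $\mathfrak{h}_3^\Q \oplus \mathfrak{h}_3^\Q$ having no eigenvalue that is a root of unity. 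By Proposition \ref{prop:preserve} some power $\psi^k$ has the same characteristic polynomial as $(\psi_1, \psi_2)$ for automorphisms $\psi_i \in \Aut(\mathfrak{h}_3^\Q)$, so the multiset of eigenvalues of $\psi^k$ is the disjoint union of those of $\psi_1$ and of $\psi_2$. As $\psi$ is integer-like its eigenvalues are algebraic units by Lemma \ref{lem:int}, hence so are their $k$-th powers, i.e.\ the eigenvalues of $\psi^k$; restricting to the sub-multiset coming from $\psi_1$ and invoking Lemma \ref{lem:int} again shows that $\psi_1$ is itself integer-like. Then Lemma \ref{lem:heis} forces $\psi_1$, and hence $\psi^k$, to have $\pm 1$ as an eigenvalue; but each eigenvalue of $\psi^k$ is a $k$-th power of an eigenvalue of $\psi$, so $\psi$ would then have a root of unity among its eigenvalues, a contradiction. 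Therefore $N_2$ admits no tame automorphism, and $G$, $N_1$, $N_2$ are as desired.

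The step I expect to require the most care is the passage from $\psi$ to the factor automorphisms $\psi_1, \psi_2$: being integer-like is not manifestly inherited by the $\psi_i$, since a factorization over $\Q$ of a characteristic polynomial need not have factors in $\Z[X]$, and it is likewise not obvious that replacing $\psi$ by the power $\psi^k$ of Proposition \ref{prop:preserve} preserves integer-likeness or the absence of roots of unity among the eigenvalues. Both points are handled by working throughout with the eigenvalue characterization of Lemma \ref{lem:int}, namely that all eigenvalues are algebraic units: this condition passes to sub-multisets of eigenvalues and is stable under taking $k$-th powers, while if $\alpha^k$ is a root of unity then so is $\alpha$. Beyond this, the argument only uses the routine facts that $\mathfrak{h}_3$ carries a positive grading, so that Proposition \ref{lauret} is applicable, and that the rational and real Mal'cev completions of a direct product of \TG s are the direct products of the completions of the factors.
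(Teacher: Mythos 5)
Your proof is correct and follows essentially the same route as the paper's: Proposition \ref{lauret} produces the tame lattice $N_1$, and for $N_2 = H_3(\Z) \times H_3(\Z)$ the combination of Proposition \ref{prop:preserve}, Lemma \ref{lem:int} and Lemma \ref{lem:heis} rules out tameness, with exactly the eigenvalue/algebraic-unit bookkeeping you describe. The only cosmetic difference is that you phrase the second half as a contradiction routed through Theorem \ref{thm:char}, whereas the paper argues directly that every automorphism $\varphi$ of $N_2$ has an iterate $\varphi^{2k}$ with eigenvalue $1$.
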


\begin{proof}
Let $N = H_3(\Z)$ be the discrete Heisenberg group, then we know that the Lie group $N^\R = H_3(\R)$ corresponds to a Lie algebra with a positive grading. Moreover, this Lie algebra is indecomposable and non-abelian. Consider the direct sum $G = \displaystyle \bigoplus_{i=1}^s N^\R$ with $s \geq 2$. By Proposition \ref{lauret} we know that there exists a cocompact lattice $N_1$ of $G$ with an Anosov automorphism, and in particular this automorphism is tame. 

However, also $N_2 = \displaystyle \bigoplus_{i=1}^s N$ is a cocompact lattice in $G$, and we claim it does not admit a tame automorphism. Indeed, let $\varphi$ be any automorphism with extension $\varphi^\Q$ to $N_2^\Q = \displaystyle \bigoplus_{i=1}^s N^\Q$. By Proposition \ref{prop:preserve} it has some iterate $\varphi^k$ such that its extension $\left(\varphi^\Q\right)^k$ has the same characteristic polynomial as $ (\psi_1,\ldots,\psi_s)$ with $\psi_i \in \Aut(H_3(\Q))$. Note that the map $\left(\varphi^\Q\right)^k$ is integer-like as extension of an automorphism on $N_2$. By Lemma \ref{lem:int} this is equivalent to the fact that all its eigenvalues are algebraic units. Hence, the maps $\psi_i$ only have eigenvalues that are algebraic units and thus must be integer-like as well. In particular, each $\psi_i^2$ has eigenvalue $1$ by Lemma \ref{lem:heis}, and thus $\varphi^{2k}$ also has eigenvalue $1$.
\end{proof}
\noindent In fact we can replace $H_3(\Z)$ in the proof with any group such that the rational Mal'cev completion has no abelian factor, satisfies Lemma \ref{lem:heis} and has a positive grading on the Lie algebra.

 It is well-known that different cocompact lattices $N_1, N_2$ in the same Lie group $G$ are quasi-isometric, hence we have the following consequence.
\begin{corollary}
The existence of a tame automorphism is not a quasi-isometric invariant on finitely generated groups.
\end{corollary}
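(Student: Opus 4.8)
The plan is to obtain this directly from Theorem \ref{thm:noQI}. First I would recall the standard fact that any finitely generated group acting properly discontinuously, cocompactly and by isometries on a proper geodesic metric space is quasi-isometric to that space (the Milnor--\v{S}varc lemma). Applying this to the left translation action of a cocompact lattice on its ambient $1$-connected nilpotent Lie group $G$, equipped with a left-invariant Riemannian metric, shows that every cocompact lattice in $G$ is quasi-isometric to $G$, and hence any two such lattices are quasi-isometric to each other. In particular the lattices $N_1$ and $N_2$ produced in Theorem \ref{thm:noQI} are quasi-isometric.

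Since $N_1$ admits a tame automorphism whereas $N_2$ does not, the property of admitting a tame automorphism cannot be preserved by quasi-isometry, which is exactly the claim; both groups are finitely generated because cocompact lattices in $1$-connected nilpotent Lie groups are finitely generated \TG s. There is essentially no obstacle here, since all the content is already contained in Theorem \ref{thm:noQI} and this corollary merely restates it in quasi-isometric language. It is worth noting that this is consistent with Theorem \ref{thm:comm}: commensurable finitely generated groups are always quasi-isometric, so being a quasi-isometric invariant is a strictly stronger requirement than being a commensurability invariant, and the existence of a tame automorphism satisfies the latter but not the former.
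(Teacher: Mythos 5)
Your proposal is correct and follows exactly the paper's route: the paper also deduces the corollary from Theorem \ref{thm:noQI} together with the standard fact that cocompact lattices in the same Lie group are quasi-isometric (which the paper simply cites as well-known, whereas you justify it via Milnor--\v{S}varc). Nothing further is needed.
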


By following the same proof as in Theorem \ref{thm:noQI}, one has the following consequence. We only sketch the proof, as it is almost identical as the one given above.

\begin{theorem}
Let $M$ and $N$ be \TG s such that $M^\Q$ and $N^\Q$ do not have an abelian factor. Then $M \oplus N$ has a tame automorphism if and only if both $M$ and $N$ have a tame automorphism.
\end{theorem}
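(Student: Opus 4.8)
The plan is to mimic the proof of Theorem~\ref{thm:noQI}, but now keeping track of the two factors separately rather than taking $s$ copies of a single group. First I would dispose of the easy direction: if both $M$ and $N$ have tame automorphisms, say $\psi_M \in \Aut(M)$ and $\psi_N \in \Aut(N)$, then the automorphism $(\psi_M, \psi_N)$ of $M \oplus N$ has eigenvalue set equal to the (multiset) union of the eigenvalues of $\psi_M$ and $\psi_N$. Since neither of these contains a root of unity, neither does $(\psi_M, \psi_N)$, so by Proposition~\ref{prop:eig} every power has finite Reidemeister number and $(\psi_M,\psi_N)$ is tame. (Alternatively, one can invoke Theorem~\ref{thm:char}: integer-like automorphisms of $M^\Q$ and $N^\Q$ without roots of unity as eigenvalues combine to an integer-like automorphism of $M^\Q \oplus N^\Q$ with the same property.)

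For the harder converse, suppose $M \oplus N$ admits a tame automorphism $\varphi$. By Theorem~\ref{thm:char} it is enough to work on the rational Mal'cev completion $(M \oplus N)^\Q = M^\Q \oplus N^\Q$, on which $\varphi$ extends to an integer-like automorphism $\varphi^\Q$ with no eigenvalues that are roots of unity. Now decompose $M^\Q = \bigoplus_i \mathfrak{m}_i$ and $N^\Q = \bigoplus_j \mathfrak{n}_j$ into indecomposable ideals; by hypothesis none of these are abelian, so the full direct sum is a decomposition of $(M\oplus N)^\Q$ into indecomposable non-abelian ideals and Proposition~\ref{prop:preserve} applies: some power $(\varphi^\Q)^k$ has the same characteristic polynomial as a product automorphism $(\psi_1, \ldots)$ respecting this decomposition. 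Grouping the blocks, $(\varphi^\Q)^k$ has the same characteristic polynomial (hence the same eigenvalue multiset) as some $\chi_M \oplus \chi_N$ with $\chi_M \in \Aut(M^\Q)$ and $\chi_N \in \Aut(N^\Q)$. Since $(\varphi^\Q)^k$ is integer-like, all its eigenvalues are algebraic units by Lemma~\ref{lem:int}; therefore the eigenvalues of $\chi_M$ and of $\chi_N$ are algebraic units, so by Lemma~\ref{lem:int} again both $\chi_M$ and $\chi_N$ are integer-like automorphisms of $M^\Q$ and $N^\Q$ respectively.

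It remains to see that $\chi_M$ (and symmetrically $\chi_N$) can be taken to have no eigenvalue that is a root of unity; but this is immediate, because the eigenvalues of $\chi_M$ are among the eigenvalues of $(\varphi^\Q)^k$, and $\varphi$ being tame forces $\varphi^\Q$ (hence $(\varphi^\Q)^k$) to have no roots of unity as eigenvalues by Proposition~\ref{prop:eig}. Thus $\chi_M$ is an integer-like automorphism of $M^\Q$ with no roots of unity among its eigenvalues, so by Theorem~\ref{thm:char} the group $M$ admits a tame automorphism, and likewise for $N$.

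I expect the main obstacle to be the bookkeeping in the middle step: making sure that the non-abelian indecomposable decompositions of $M^\Q$ and $N^\Q$ really do combine into a decomposition to which Proposition~\ref{prop:preserve} applies (this is where the hypothesis that neither $M^\Q$ nor $N^\Q$ has an abelian factor is essential, since Proposition~\ref{prop:preserve} requires all the indecomposable ideals to be non-abelian), and then correctly regrouping the product automorphism $(\psi_1,\ldots,\psi_s)$ into an $M^\Q$-part and an $N^\Q$-part so that each part is a genuine automorphism of the corresponding completion. Everything else is a routine transfer through Lemma~\ref{lem:int}, Theorem~\ref{thm:char} and Proposition~\ref{prop:eig}, exactly as in Theorem~\ref{thm:noQI}; this is why only a sketch is needed.
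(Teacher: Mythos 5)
Your proposal is correct and follows essentially the same route as the paper: the easy direction by combining the two automorphisms, and the converse by decomposing $M^\Q \oplus N^\Q$ into indecomposable non-abelian ideals, applying Proposition \ref{prop:preserve}, transferring the integer-like and no-root-of-unity conditions through Lemma \ref{lem:int}, regrouping into automorphisms of $M^\Q$ and $N^\Q$, and concluding via Theorem \ref{thm:char}. The paper's own proof is exactly this sketch, phrased as a reference back to the argument of Theorem \ref{thm:noQI}.
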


\begin{proof}
One direction is clear, as tame automorphisms $\varphi_1 \in \Aut(M), \varphi_2 \in \Aut(N)$ can be combined into the tame automorphism $(\varphi_1,\varphi_2)$ on $M \oplus N$. 

For the other direction, let $M^\Q = \displaystyle \bigoplus_{i=1}^s M_i^\Q$ and $N^\Q = \displaystyle \bigoplus_{j=1}^t N_j^\Q$ be the decomposition into indecomposable groups, which follows from the decomposition of the rational Lie algebra. If one starts from a tame automorphism $\varphi \in \Aut(M \oplus N)$, one finds just as in the proof of Theorem \ref{thm:noQI} that there exists integer-like automorphisms $\psi_i \in \Aut(M_i^\Q), \psi^\prime_j \in \Aut(N_j^\Q)$ with no eigenvalues that are roots of unity. In particular, we can conclude via Theorem \ref{thm:char} that both $M$ and $N$ have a tame automorphism, again by considering the automorphisms $(\psi_1, \ldots, \psi_s)$ and $(\psi^\prime_1, \ldots, \psi^\prime_t)$ on $M^\Q$ and $N^\Q$.
\end{proof}

\subsection{Characterization for certain classes}

In this section, we characterize for different families of groups which ones admit a tame automorphism. We will relate this to other properties of the group, such as $R_\infty$ or having an Anosov automorphism.

It is clear that if a group has the $R_\infty$-property, it does not admit a tame automorphism. However, the converse is not true, the easiest examples being $\Z$ and $H_3(\Z)$. 

\begin{Ex}
	\label{ex:heis}
The only automorphisms of $\Z$ are $\Id_\Z$ and $-\Id_\Z$. As $R(\Id_\Z) = \infty$ and $(-Id_\Z)^2 = \Id_\Z$, it is clear that $R(\varphi^2) = \infty$ for any $\varphi \in \Aut(\Z)$. 
	
Let $H_3(\Z)$ the discrete Heisenberg group. Every automorphism $\varphi: H_3(\Z) \to H_3(\Z)$ preserves the center $Z(H_3(\Z))$. As this is a cyclic group, generated by an element $c \in H_3(\Z)$, the automorphism $\varphi$ satisfies either $\varphi(c) = c$ or $\varphi(c) = c^{-1}$. In both case we have that $R(\varphi^2)=\infty$, for example by using Proposition \ref{prop:eig}, or thus $H_3(\Z)$ does not admit a tame  automorphism. 

These examples also illustrate Theorem \ref{thm:char}, as every integer-like automorphism indeed has an eigenvalue that is a root of unity by Lemma \ref{lem:heis}.
\end{Ex}

This generalizes to the class of free $c$-step nilpotent groups $N_{k,c}$ on $k$ generators, of which both groups above are an example, namely $\Z = N_{1,c}$ and $H_3(\Z) = N_{2,2}$.
\begin{Prop}
	\label{prop:free}
	Let $N_{k,c}$ be a free nilpotent group of nilpotency class $c$ on $k \geq 2$ generators. The group $N_{k,c}$ has a tame automorphism if and only if $c < k$. 
\end{Prop}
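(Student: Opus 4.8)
The plan is to reduce the statement, via Theorem~\ref{thm:char}, to the existence of an integer-like automorphism of the rational Mal'cev completion $N_{k,c}^{\Q}$ with no eigenvalue a root of unity, and to work throughout at the level of the Lie algebra $\lien$ of $N_{k,c}^{\Q}$, i.e.\ the free $c$-step nilpotent Lie algebra on $k$ generators over $\Q$, with its grading $\lien = \bigoplus_{i=1}^{c} L_i$ coming from the lower central series (so $L_1 = V$ has dimension $k$ and $L_i \subseteq V^{\otimes i}$). First I would record the eigenvalue description: any $\psi \in \Aut(\lien)$ preserves this filtration and acts on each $L_i$ through $(\psi|_V)^{\otimes i}$; writing $\lambda_1,\dots,\lambda_k$ for the eigenvalues of $\psi|_V$ and passing to a basis in which $\psi|_V$ is diagonal (the eigenvalues of $\psi$ on $\lien$ depend only on those of $\psi|_V$, so this costs nothing), the multidegree-$\mathbf m$ component of $L_i$ is an eigenspace of $\psi$ with eigenvalue $\prod_j \lambda_j^{m_j}$. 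In particular every eigenvalue of $\psi$ on $\lien$ is a monomial $\prod_j \lambda_j^{a_j}$ with $1 \le \sum_j a_j \le c$, hence involves at most $c$ of the $\lambda_j$.

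For the direction $c \ge k$ I would argue by contradiction. A tame automorphism of $N_{k,c}$ yields, by Theorem~\ref{thm:char}, an integer-like $\psi \in \Aut(\lien)$ with no root of unity among its eigenvalues; by Lemma~\ref{lem:int} the eigenvalues $\lambda_1,\dots,\lambda_k$ of $\psi|_V$ are algebraic units, so $\lambda_1\cdots\lambda_k = \det(\psi|_V) = \pm 1$. Since $c \ge k$, the degree-$k$ component $L_k$ is a graded piece of $\lien$, and its multilinear part $(L_k)_{(1,\dots,1)}$ — the span of the brackets using each of the $k$ generators exactly once — is nonzero, since it contains the left-normed bracket $[e_1,[e_2,[\dots,[e_{k-1},e_k]\dots]]]$ (whose image in $V^{\otimes k}$ has a nonzero $e_1\otimes\cdots\otimes e_k$-coefficient). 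By the first step $\psi$ acts on this subspace by the scalar $\lambda_1\cdots\lambda_k = \pm 1$, so $\pm 1$ is an eigenvalue of $\psi$ — a root of unity, contradiction. Hence $N_{k,c}$ has no tame automorphism when $c \ge k$.

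For the direction $c < k$ I would exhibit a tame automorphism directly (alternatively, one may invoke the known fact that $N_{k,c}$ carries an Anosov automorphism when $c<k$, which gives tameness via Corollary~\ref{cor:anotame}). Let $p_k(X) = X^k - X^{k-1} - \cdots - X - 1$ be the minimal polynomial of the $k$-bonacci constant: it is monic, irreducible over $\Q$, satisfies $p_k(0) = -1$, and has exactly one root $\theta > 1$ while its other $k-1$ roots lie in the open unit disk (a Pisot unit). Its companion matrix $A \in \GL_k(\Z) = \Aut(\Z^k)$ lifts, by freeness of $N_{k,c}$, to an automorphism $\psi$ of $N_{k,c}$ inducing $A$ on the abelianization. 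By the eigenvalue description, every eigenvalue of $\psi$ on $\lien$ is a monomial using a proper subset $S \subsetneq \{\lambda_1,\dots,\lambda_k\}$ of the roots of $p_k$ (at most $c<k$ of them); since $p_k$ is irreducible, the Galois group acts transitively on its roots, so $S$ has a Galois translate avoiding $\theta$, i.e.\ the corresponding monomial has a conjugate that is a product of roots inside the open unit disk and hence has absolute value $<1$. As a root of unity has all conjugates on the unit circle, no eigenvalue of $\psi$ is a root of unity, and the same holds for every power $\psi^n$; thus $R(\psi^n)<\infty$ for all $n$ by Proposition~\ref{prop:eig}, so $\psi$ is tame.

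I expect the main obstacle to be the direction $c < k$: one must either locate and cite the construction of an Anosov automorphism on the free nilpotent group of class below the number of generators, or carefully verify the arithmetic input about the multinacci polynomial $p_k$ — its irreducibility together with the Pisot-unit property of $\theta$. The direction $c \ge k$ is comparatively routine once the eigenvalue description is in place; the points needing a little care there are the reduction to a diagonalizable $\psi|_V$ and the non-vanishing of the multilinear component of $L_k$, both standard facts about free Lie algebras.
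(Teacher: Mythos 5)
Your proposal is correct, and the hard direction ($c\geq k$) follows essentially the paper's own argument: the paper also passes to the induced map on the abelianization, takes the product $\lambda_1\cdots\lambda_k=\det(\overline{\varphi})=\pm1$ of its eigenvalues, and observes that this product occurs as an eigenvalue of $\varphi$ in degree $k$, concluding $R(\varphi^2)=\infty$ via Proposition \ref{prop:eig}. The difference is that the paper outsources the key eigenvalue fact to \cite[Lemma 2.4.]{dg14-1}, whereas you reprove it via the nonvanishing of the multilinear component of $L_k$ (witnessed by the left-normed bracket); that is a correct, standard argument, and your flagged reduction to diagonalizable $\psi|_V$ is harmless since the eigenvalues of the induced map on each graded piece depend polynomially on $\psi|_V$ and can be checked on the Zariski-dense diagonalizable locus. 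Where you genuinely diverge is the direction $c<k$: the paper simply cites the existence of an Anosov automorphism from \cite{dv09-1} and applies Corollary \ref{cor:anotame}, while you additionally give a self-contained construction via the companion matrix of the multinacci polynomial $p_k$, using irreducibility and transitivity of the Galois action to find, for each monomial eigenvalue supported on a proper subset of the roots, a conjugate of modulus $<1$. This buys a more elementary and explicit proof that does not require the full Anosov property (your automorphism need not be Anosov, only free of root-of-unity eigenvalues, which is exactly what tameness needs by Theorem \ref{thm:char}), at the cost of having to verify the arithmetic of $p_k$ (irreducibility, $p_k(0)=-1$, one root outside and $k-1$ roots inside the unit circle), all of which are classical facts about the multinacci/Pisot polynomial. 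Both routes are valid; the paper's is shorter, yours is more self-contained and isolates precisely how much less than Anosov suffices.
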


\begin{proof}
One direction is clear, as if $c < k$, the group $N_{k,c}$ has an Anosov automorphism, see \cite{dv09-1}. For the other direction, consider an automorphism $\varphi: N_{k,c} \to N_{k,c}$ and assume that $c \geq k$. Using the corresponding real Lie algebra $\lien^\R$ and the extension $\varphi^\R \in \Aut(\lien^\R)$, which is a free $c$-step nilpotent Lie algebra on $k$ generators, we will show that $\varphi^2$ has an eigenvalue $1$. The argument greatly uses the techniques as described in \cite{dg14-1}. 

Write $\overline{\varphi}^\R$ for the induced map on the abelianization $\faktor{\lien^\R}{[\lien^\R,\lien^\R]}$ and denote by $\lambda_1, \ldots, \lambda_k$ the (complex) eigenvalues of $\overline{\varphi}^\R$, counted with multiplicity. Then $\varphi^\R$ also has an eigenvalue equal to $\displaystyle \prod_{i=1}^k \lambda_i$ as $k \geq c$, see \cite[Lemma 2.4.]{dg14-1}. Moreover this is equal to the determinant of $\overline{\varphi}^\R$. As $\overline{\varphi}^\R$ is the extension of the induced map $\overline{\varphi}$ on $\faktor{N}{[N,N]}$, it is integer-like and thus this determinant is equal to $\pm1$. In particular, $\varphi^2$ has eigenvalue $1$ and thus $R(\varphi^2) = \infty$.
\end{proof}
\noindent Note the condition of Proposition \ref{prop:free} exactly corresponds to the condition for having an Anosov automorphism, see \cite{dv09-1}. In the following subsection we raise the question whether this is always the case.

There is another generalization possible of Example \ref{ex:heis}, namely the $2$-step nilpotent groups associated to a graph $X$, given by a set $V$ of vertices and a set $E$ of edges, see \cite{lw23-1}. We always assume that the graph $X$ is undirected and simple, i.e.~it has no loops or multiple edges. For example, on one vertex you can only have the addite group $\Z$, whereas on two vertices you can find either $H_3(\Z)$ or $\Z^2$, depending on whether the vertices are connected or not. 

Given such a graph $X$, we can define an equivalence relation $\sim$ on the vertices $V$, where $v \sim w$ if and only if the transposition $(vw)$ is a graph automorphism. The equivalence classes $\lambda_i$ for $i \in \{1,\ldots, s\}$ are called the coherent components components of $X$, with $V = \displaystyle \bigsqcup_{i=1}^s \lambda_i$. Tt is easy to see that either the subgraph $\lambda_i$ is empty (i.e.~has no edges) or is complete (i.e.~any two vertices in $\lambda_i$ are connected). If we write $N_{\lambda_i}^\Q$ for the rational Mal'cev completion of the group corresponding to this subgraph, then $N_X^\Q$ is generated by the subgroups $N_{\lambda_i}^\Q$ with $i=1, \ldots, s$. 

If $\psi_i^\Q$ are automorphisms of $N_{\lambda_i}^\Q$, it is not too hard to see that there exists an induced automorphism on $N_X^\Q$ given by $\psi_i$ on elements $x \in N_{\lambda_i}$. So in particular, we have a natural inclusion $\Aut(N_{\lambda_i}^\Q) \subset \Aut(N_X^\Q)$. In \cite{dm05-1,dm22-1}, the authors describe the structure of the $\Q$-linear algebraic group $\Aut(N_X^\Q)$ using these coherent components. We rephrase part of the main result for our purposes.

\begin{Prop}
Let $N_X^\Q$ be the rational Mal'cev completion of a $2$-step nilpotent group $N_X$ associated to a graph $X$. Every automorphism $\psi \in N_X^\Q$ has a power $\psi^k$ with $k > 0$ that has the same characteristic polynomial as an automorphism induced by automorphisms $\psi_i \in \Aut(N_{\lambda_i}^\Q)$.
\end{Prop}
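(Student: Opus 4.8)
The plan is to mimic the structure of the proof of Theorem \ref{thm:noQI}, reducing the general automorphism to one that "lives on the coherent components" after passing to a power, and then invoking the cited structural description of $\Aut(N_X^\Q)$ from \cite{dm05-1,dm22-1}.

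First I would recall the explicit decomposition of $N_X^\Q$: since $N_X$ is $2$-step nilpotent, its rational Lie algebra $\mathfrak{n}_X^\Q$ decomposes as $V^\Q \oplus W^\Q$ where $V^\Q$ is spanned by the vertices and $W^\Q \subset [\mathfrak{n}_X^\Q, \mathfrak{n}_X^\Q]$ is spanned by the brackets $[v,w]$ over edges $\{v,w\} \in E$. The coherent components $\lambda_1, \dots, \lambda_s$ partition $V$, and each $N_{\lambda_i}^\Q$ sits inside $N_X^\Q$ via the vertices in $\lambda_i$ together with the internal edges (each $\lambda_i$ being either edgeless or complete). The key point from \cite{dm05-1,dm22-1} is that the $\Q$-linear algebraic group $\Aut(N_X^\Q)$ has a normal "unipotent-type" part together with a reductive-type part, and the reductive part is built, up to finite index, from $\prod_i \Aut(N_{\lambda_i}^\Q)$ acting in a block structure compatible with the partition of $V$; the components of $\psi$ that mix different $\lambda_i$ or act within $W^\Q$ relative to $V^\Q$ are in the unipotent radical.

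The main step: given $\psi \in \Aut(N_X^\Q)$, consider the induced map $\overline{\psi}$ on the abelianization $\mathfrak{n}_X^\Q / [\mathfrak{n}_X^\Q, \mathfrak{n}_X^\Q] = V^\Q$. The matrix of $\overline{\psi}$, up to a power, preserves the subspaces spanned by the coherent components — this is precisely where the graph-automorphism structure of $\Aut(N_X^\Q)$ enters, since permutations of components have finite order, so after passing to a power $\psi^k$ the map $\overline{\psi^k}$ is block upper-triangular (or block diagonal up to conjugation) with diagonal blocks $\overline{\psi_i}$ lying in $\Aut(N_{\lambda_i}^\Q)$ acting on the abelianization. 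The off-diagonal blocks contribute only to a unipotent part and hence do not affect the characteristic polynomial. On the derived subalgebra $W^\Q$, the action of $\psi^k$ is induced by its action on $V^\Q$ (since brackets go to brackets), so again after controlling for the component structure the eigenvalues on $W^\Q$ are products $\mu \nu$ of eigenvalues of diagonal blocks. Assembling: the characteristic polynomial of $\psi^k$ equals that of the induced automorphism $(\psi_1, \dots, \psi_s)$, where $\psi_i \in \Aut(N_{\lambda_i}^\Q)$ is obtained from the $i$-th diagonal block.

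The hard part — which I would handle by an explicit citation rather than reproving — is justifying that the off-diagonal and "mixed" parts of $\psi^k$ really are unipotent, i.e.~that the reductive quotient of $\Aut(N_X^\Q)$ is (virtually) $\prod_i \Aut(N_{\lambda_i}^\Q)$ with the expected block action on both $V^\Q$ and $W^\Q$. This is exactly the content of the main theorems of \cite{dm05-1,dm22-1}, so the proof would consist of carefully quoting their description of the Levi decomposition of $\Aut(N_X^\Q)$, noting that the finite group of graph automorphisms permuting coherent components forces passage to a power $\psi^k$, and then reading off that the semisimple part of $\psi^k$ is conjugate to $(\psi_1, \dots, \psi_s)$ for suitable $\psi_i \in \Aut(N_{\lambda_i}^\Q)$ — which suffices, since characteristic polynomials are invariant under conjugation and unchanged by the unipotent part.
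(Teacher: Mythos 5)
Your proposal is correct and follows essentially the same route as the paper: both arguments cite the result of \cite{dm05-1} that a Levi subgroup of $\Aut(N_X^\Q)^0$ is the product of Levi subgroups of the $\Aut(N_{\lambda_i}^\Q)^0$, pass to a power $\psi^k$ landing in the identity component, and conclude by taking the semisimple part of $\psi^k$ (which has the same characteristic polynomial and is conjugate into that product Levi subgroup). The paper's version is somewhat leaner --- it does not need the explicit $V^\Q \oplus W^\Q$ block analysis, since the Jordan decomposition and conjugacy of Levi subgroups already do all the work --- but the substance is identical.
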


\begin{proof}
We write $G^0$ for the connected component of the identity in a $\Q$-linear algebraic group $G$. Recall that a Levi subgroup is a maximal reductive subgroup in $G^0$, and these are unique up to conjugation. It is a consequence of \cite[Theorem 4.2.(ii)]{dm05-1} that if for every $i \in \{1, \ldots, s\}$ we have a Levi subgroup $L_i \subset \Aut(N_{\lambda_i}^\Q)^0$, then the product $L = \displaystyle \prod_{i=1}^s L_i$ forms a Levi subgroup of $\Aut(N_{X}^\Q)^0$.  As $\Aut(N_X^\Q)$ only has a finite number of connected components, there exists some $k > 0$ such that $\psi^k$ lies in $\Aut(N_X^\Q)^0$. The semisimple part of $\psi^k$ has the same characteristic polynomial as $\psi^k$ and moreover lies in some Levi subgroup of $\Aut(N_X^\Q)^0$. Hence it is conjugate in $\Aut(N_X^\Q)^0$ to an element of $L$, and hence to an element induced by automorphisms $\psi_i$.
\end{proof}
 
 We can now characterize the groups $N_X$ having a tame automorphism.
 
\begin{theorem}
	\label{thm:graph}
	Let $N_{X}$ be a $2$-step nilpotent group associated to a graph $X$. The group has a tame automorphism if and only if each coherent component $\lambda$ of $X$ either is empty of size $2$ or has size at least $3$.
\end{theorem}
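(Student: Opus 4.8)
The plan is to run the dichotomy already visible in Lemma~\ref{lem:heis} and Proposition~\ref{prop:free} one coherent component at a time, reading everything off characteristic polynomials via the last proposition above and Theorem~\ref{thm:char}. The first step is to translate the hypothesis: the group $N_\lambda$ attached to a coherent component $\lambda$ is free abelian of rank $|\lambda|$ when the induced subgraph on $\lambda$ is empty and free $2$-step nilpotent on $|\lambda|$ generators when it is complete (the two descriptions coinciding, and giving $\Z$, when $|\lambda|=1$), so that the condition that every coherent component be empty or of size at least $3$ says precisely that no $N_{\lambda_i}$ is isomorphic to $\Z$ or to $H_3(\Z)$, equivalently that each $N_{\lambda_i}$ admits an Anosov automorphism — a hyperbolic element of $\GL_{|\lambda_i|}(\Z)$ for the empty components, which exists once $|\lambda_i|\ge 2$, and the automorphism of \cite{dv09-1} for the complete components of size at least $3$. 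By Theorem~\ref{thm:char} it then suffices to decide whether $N_X^\Q$ carries an integer-like automorphism with no root-of-unity eigenvalue, and for this I would set up once and for all the eigenvalue bookkeeping for an automorphism of $\lien_X^\Q$ induced by maps $\psi_i\in\Aut(N_{\lambda_i}^\Q)$: since such a map respects the grading and the bracket $\lien_{\lambda_i}^\Q\otimes\lien_{\lambda_j}^\Q\to\lien_X^\Q$ is equivariant for it, its eigenvalues are the degree-one eigenvalues $\alpha_v$ of the $\psi_i$ together with a sub-multiset of the products $\alpha_v\alpha_w$ over pairs of vertices, where the products with $v,w$ lying in one complete component $\lambda_i$ are exactly the degree-two eigenvalues of $\psi_i$. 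This bookkeeping is the delicate step; once it is in place the two implications are short.

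For the implication that no bad component forces a tame automorphism, fix an Anosov automorphism $\psi_i$ on each $N_{\lambda_i}$, so that all $\alpha_v$ and all products $\alpha_v\alpha_w$ occurring within a single component have modulus different from $1$, and consider the automorphism of $N_X^\Q$ induced by powers $\psi_1^{r_1},\dots,\psi_s^{r_s}$. By the bookkeeping its only eigenvalues that could be roots of unity are the cross-component products $\alpha_v^{r_i}\alpha_w^{r_j}$ with $i\neq j$, and such a product has modulus $1$ exactly when $r_i\log|\alpha_v|+r_j\log|\alpha_w|=0$, which for each of the finitely many pairs excludes at most one ratio $r_i:r_j$; choosing $(r_1,\dots,r_s)\in\Z_{>0}^s$ that avoids all of these makes every eigenvalue of the induced automorphism an algebraic unit of modulus $\neq 1$ (each being a product of powers of eigenvalues of the integer-like maps $\psi_i$). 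By Lemma~\ref{lem:int} the map is then integer-like, and since none of its eigenvalues is a root of unity, $N_X$ admits a tame automorphism by Theorem~\ref{thm:char}.

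For the converse, the argument runs exactly as in the proof of Theorem~\ref{thm:noQI}. Suppose some coherent component $\lambda$ violates the condition, so $N_\lambda^\Q$ is $\Q$ (if $|\lambda|=1$) or $H_3(\Q)$ (if $\lambda$ is complete of size $2$), and let $\varphi\in\Aut(N_X)$ be arbitrary. Its extension $\varphi^\Q$ is integer-like, and by the last proposition above a power $(\varphi^\Q)^k$ has the characteristic polynomial of a map induced by some $\psi_i\in\Aut(N_{\lambda_i}^\Q)$. The line $L\subset\lien_X^\Q$ spanned by the single vertex of $\lambda$ in the first case, or equal to $[\lien_\lambda^\Q,\lien_\lambda^\Q]$ in the second, is invariant under that induced map, and the action on $L$ is multiplication by an element of $\Q$ — the determinant of the degree-one part of $\psi_\lambda$ in the Heisenberg case, exactly as in the proof of Lemma~\ref{lem:heis}. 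Being an eigenvalue of the integer-like map $(\varphi^\Q)^k$, this rational number is an algebraic unit, hence $\pm 1$; so $(\varphi^\Q)^{2k}$ has $1$ as an eigenvalue, $R(\varphi^{2k})=\infty$ by Proposition~\ref{prop:eig}, and $\varphi$ is not tame. As $\varphi$ was arbitrary, $N_X$ has no tame automorphism.
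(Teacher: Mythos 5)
Your proof is correct, and your converse direction is essentially the paper's own argument: pass to a power of $\varphi^\Q$ whose characteristic polynomial is that of an automorphism induced by $\psi_i\in\Aut(N_{\lambda_i}^\Q)$, deduce via Lemma~\ref{lem:int} that each $\psi_i$ is integer-like, and apply Lemma~\ref{lem:heis} to force an eigenvalue $\pm1$ (hence $R(\varphi^{2k})=\infty$ by Proposition~\ref{prop:eig}) whenever some $N_{\lambda_i}^\Q$ is $\Q$ or $H_3(\Q)$. Where you genuinely diverge is the forward direction: the paper disposes of it in one line by citing \cite{dm05-1} for the existence of an Anosov automorphism under the stated condition on coherent components, whereas you reconstruct it by hand, taking componentwise Anosov automorphisms, setting up the eigenvalue bookkeeping on the graded Lie algebra, and choosing exponents $r_1,\dots,r_s$ avoiding the finitely many cross-component resonances $r_i\log|\alpha_v|+r_j\log|\alpha_w|=0$. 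That argument is sound --- the all-or-nothing adjacency between distinct coherent components is exactly what makes the kernel of $\Lambda^2$ of the abelianization onto $[\lien_X,\lien_X]$ invariant, so the degree-two eigenvalues are indeed a sub-multiset of the pairwise products --- and it buys a self-contained proof at the price of essentially reproving the relevant half of the Dani--Mainkar theorem. One small point in your favor: the theorem's phrasing is ambiguous about singleton coherent components, which are ``empty'' as subgraphs yet contribute a $\Q$ factor and must be excluded; your converse explicitly lists $|\lambda|=1$ among the bad cases and your forward direction only claims hyperbolic integer matrices for empty components of size at least two, which is the reading the paper's own proof implicitly uses as well.
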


\begin{proof}
	
	One direction is clear, as any group satisfying these properties has an Anosov automorphism by \cite{dm05-1}. For the other direction, assume that $\varphi: N_X \to N_X$ is a tame automorphism, and consider the extension $\varphi^\Q: N_X^\Q \to N_X^\Q$. From \cite{dm05-1} it follows that some power $(\varphi^\Q)^k$ has the same characteristic polynomial as an automorphism $\psi$ induced by automorphisms $\psi_i$ on the $N_{\lambda_i}^\Q$ with $\lambda_i$ the coherent components. As $(\varphi^\Q)^k$ and hence also $\psi$ is integer-like, we thus get that the automorphisms $\psi_i$ also are integer-like. By Lemma \ref{lem:heis}, the groups $N_{\lambda_i}^\Q$ are not isomorphic to $\Q$ or $H_3(\Q)$. In particular, for every coherent component $\lambda_i$ we have that either $\lambda_i$ consists of three elements, or $N_{\lambda_i}^\Q$ is isomorphic to $\Q^2$ and thus $\lambda_i$ is the empty graph.
\end{proof}

\noindent Again, the condition of Theorem \ref{thm:graph} is equivalent to the condition for having an Anosov automorphism, see \cite{dm05-1}.

At this point it is worth to point out that the \TG s having an Anosov automorphism are rather rare. Indeed, it is shown in \cite[Theorem 5.1.]{dw23-1} that the nilpotent group associated to a random graph does not support an Anosov automorphism. This shows that it is to be expected that only few groups satisfy Question \ref{QN1}, or thus that in order to check Conjecture \ref{con} we have a strong restriction on the possible examples.

\subsection{Open questions}

In all examples of this paper, we used Corollary \ref{cor:anotame} to show that a \TG~$N$ admits a tame automorphism. It is unclear whether there exists a \TG~which only admits tame automorphisms of a different type.

\begin{QN}
	Does there exist a \TG~$N$ with a tame automorphism $\varphi: N \to N$ such that $N$ does not admit an Anosov automorphism?
\end{QN}

\noindent Of course, there exist tame automorphisms that are not Anosov, of which we present one below. However, in all known examples, the group has other automorphisms that are Anosov.

\begin{Ex}
	Consider the group $\Z^4$, where automorphisms are given by integer matrices having determinant $\pm1$. The matrix $$A = \begin{pmatrix}
		0 & 0 & 0 & -1 \\
		1 & 0 & 0 & 2\\
		0 & 1 & 0 & 0\\
		0 & 0 & 1 & 2\\
	\end{pmatrix},
	$$ has characteristic polynomial equal to $p(x)=x^4-2x^3-2x +1$. Next to two real roots different from $\pm1$, $p(x)$ also has roots $$\frac{1 -\sqrt{3} \pm  i  \sqrt[4]{12}}{2},$$ and a direct computation shows that these have absolute value $1$, but are not roots of unity. So $A$ induces a tame automorphism that is not Anosov. The group $\Z^4$ also has Anosov automorphisms, for example the one induced by the matrix $$B = \begin{pmatrix}
		2 & 1 & 0 & 0\\
		1 & 1 & 0 & 0\\
		0 & 0 & 2 & 1\\
		0 & 0 & 1 & 1
	\end{pmatrix}.$$
\end{Ex}

On the other hand, to show that an example does not admit a tame automorphism, there always existed a fixed power $n>0$ such that every $\varphi^n$ has an infinite Reidemeister number. For example, on $\Z$ and $H_3(\Z)$ we had that $R(\varphi^2) = \infty$ for all automorphisms $\varphi$, see Example \ref{ex:heis}. We wonder whether there are groups for which this property does not hold.

\begin{QN}
Find a group $G$ without a tame automorphism, but such that there does not exist some fixed $n > 0$ for $G$ with $R(\varphi^n) = \infty$ for all $\varphi \in \Aut(G)$.
\end{QN}
\noindent It is unclear if such examples can be found in the class of \TG s. If such $n$ exists for a group $G$, one can define $n(G)$ as the minimal $n > 0$ such that $R\left(\varphi^{n} \right) = \infty$ for all $\varphi \in \Aut(G)$. In particular, a group $G$ has $R_\infty$ if and only if $n(G) = 1$, and we have that $n(\Z) = n(H_3(\Z)) = 2$.

Finally, we focused on the case of automorphisms on \TG s in this paper. However, the question remains what can be said for general monomorphisms $\varphi: \Gamma \to \Gamma$ on arbitrary virtually nilpotent groups. 
\begin{QN}
Given a virtually nilpotent group $\Gamma$ with a tame monomorphism $\varphi: \Gamma \to \Gamma$, what can we say about the structure of $\Gamma$? 
\end{QN}
\noindent An answer to this question will help to solve Conjecture \ref{con}. There is a special class of monomorphisms that is always tame, namely the expanding ones. However, there exist  \TG s having no expanding monomorphism, but a tame automorphism (indeed, even an Anosov automorphism), see \cite{dere14-1,dw22-1}. Hence the properties of this class remain unclear at the moment.

\bibliography{ref}
\bibliographystyle{plain}

\end{document}